\theoremstyle{plain}
\newtheorem{theorem}{Theorem}
\newtheorem{lemma}{Lemma}
\newtheorem{problem}{Problem}
\newtheorem{proposition}{Proposition}
\theoremstyle{definition}
\newtheorem{remark}{Remark}
\renewcommand{\Re}{\mathop{\mathrm{Re}}}
\renewcommand{\Im}{\mathop{\mathrm{Im}}}
\DeclareMathOperator{\supp}{supp}
\renewcommand{\thefootnote}{\fnsymbol{footnote}}
\begin{document}
\begin{center}\LARGE Error estimates for phaseless inverse scattering in the Born approximation at high energies\footnote[1]{Dedicated to G. M. Henkin}\end{center}
\renewcommand*{\thefootnote}{\arabic{footnote}}
\begin{center}\it A. D. Agaltsov\footnote{CMAP, Ecole Polytechnique, CNRS, Universit\'e Paris-Saclay, 91128, Palaiseau, France; email: agaltsov@cmap.polytechnique.fr} and R. G. Novikov\footnote{CMAP, Ecole Polytechnique, CNRS, Universit\'e Paris-Saclay, 91128, Palaiseau, France; IEPT RAS, 117997 Moscow, Russia; email: novikov@cmap.polytechnique.fr}\end{center}
\begin{center} \today \end{center}

\begin{quote} \textbf{Abstract.} We study explicit formulas for phaseless inverse scattering in the Born approximation at high energies for the Schr\"odinger equation with compactly supported potential in dimension $d \geq 2$. We obtain error estimates for these formulas in the configuration space.
\end{quote}

\section{Introduction}
We consider the time-independent Schr\"odinger equation
\begin{equation}\label{in.schreq}
    -\Delta \psi + v(x)\psi = E\psi, \quad x \in \mathbb R^d, \; d \geq 2, \; E > 0,
\end{equation}
where
\begin{equation}
    v \in L^\infty(\mathbb R^d), \quad \supp v \subset D, \label{in.vprop}
\end{equation}
where $D$ is some fixed open bounded domain in $\mathbb R^d$. 

In quantum mechanics equation \eqref{in.schreq} describes an elementary particle interacting with a macroscopic object contained in $D$ at fixed energy $E$. In this setting one usually assumes that $v$ is real-valued.

Equation \eqref{in.schreq} at fixed $E$ can be also interpreted as the Helmholtz equation of acoustics or electrodynamics. In these frameworks the coefficient $v$ can be complex-valued. In addition, the imaginary part of $v$ is related to the absorption coefficient.

For equation \eqref{in.schreq} we consider the classical scattering solutions $\psi^+ = \psi^+(x,k)$, where $x = (x_1,\dots,x_d) \in \mathbb R^d$, $k = (k_1,\dots,k_d) \in \mathbb R^d$, $k^2 = k_1^2+\cdots+k_d^2 = E$. These solutions $\psi^+$ can be specified by the following asymptotics as $|x| \to \infty$:
\begin{equation}\label{in.psi+as}
    \begin{gathered}
    \psi^+(x,k) = e^{ikx} + c(d,|k|) \frac{e^{i|k||x|}}{|x|^{(d-1)/2}} f(k,|k|\tfrac{x}{|x|}) + O(|x|^{-(d+1)/2}), \\
    x \in \mathbb R^d, \; k \in \mathbb R^d, \; k^2 = E, \; kx = k_1 x_1 + \cdots + k_d x_d, \\
    c(d,|k|) = -\pi i (-2\pi i)^{(d-1)/2} |k|^{(d-3)/2},
    \end{gathered}
\end{equation}
for some a priori unknown $f$. The function $f$ arising in \eqref{in.psi+as} is defined on 
\begin{equation}
         \mathcal M_E = \bigl\{ (k,l) \in \mathbb R^d \times \mathbb R^d \colon k^2 = l^2 = E \bigr\},
\end{equation}
and is known as the classical scattering amplitude for equation \eqref{in.schreq}.

In quantum mechanics $|f(k,l)|^2$ describes the probability density of scattering of particle with initial momentum $k$ into direction $l/|l| \neq k/|k|$, and is known as differential scattering cross section for equation \eqref{in.schreq}; see, e.g., \cite[Chapter 1, Section 6]{Fad1993}.

The problem of finding $\psi^+$ and $f$ from $v$ is known as the direct scattering problem for equation \eqref{in.schreq}. For solving this problem, one can use, in particular, the Lippmann-Schwinger integral equation for $\psi^+$ and an explicit integral formula for $f$, see, e.g., \cite{Berez1991,Fad1976,Nov2016}.

In turn, the problem of finding $v$ from $f$ is known as the inverse scattering problem (with phase information) and the problem of finding $v$ from $|f|^2$ is known as the phaseless inverse scattering problem for equation \eqref{in.schreq}.

There are many important results on the former inverse scattering problem with phase information; see \cite{Alex2008,Barc2016,Bur2009en,Chad1989,Eskin2011,Fad1956,Gonchar1992,Grin2000,Haeh2001,Henk1987,Henk1988,Isay2013c,Isay2013b,Nov1988en,Nov1998,Nov2005b,Nov2013b,Nov2015en} and references therein. In particular, it is well known that the scattering amplitude $f$ uniquely determines $v$ via the Born approximation formulas at high energies:
\begin{gather}
    \widehat v(k-l) = f(k,l) + O(E^{-\frac 1 2}), \quad E \to + \infty, \; (k,l) \in \mathcal M_E, \label{in.vborn}\\ 
    \widehat v(p) = (2\pi)^{-d} \int_{\mathbb R^d} e^{ipx} v(x) \, dx, \quad p \in \mathbb R^d,\label{in.fourier}
\end{gather}
and the inverse Fourier transform; see, e.g., \cite{Fad1956,Nov2015en}.

On the other hand, the literature for the phaseless case is much more limited; see \cite{Chad1989,Nov2016} and references therein for the case of the aforementioned phaseless inverse problem and see \cite{Klib2014,Klib2016a,Klib2016b,Nov2015LS,Nov2015b,Nov2016} and references therein for the case of some similar inverse problems without phase information. In addition, it is well known that the phaseless scattering data $|f|^2$ does not determine $v$ uniquely, even if $|f|^2$ is given completely for all positive energies. In particular, it is known that
\begin{equation}
    \begin{gathered}
    f_y(k,l) = e^{i(k-l)y} f(k,l), \\
    |f_y(k,l)|^2 = |f(k,l)|^2, \quad k,l \in \mathbb R^d, \; k^2 = l^2 > 0,
    \end{gathered}
\end{equation}
where $f$ is the scattering amplitude for $v$ and $f_y$ is the scattering amplitude for $v_y = v(\cdot-y)$, where $y \in \mathbb R^d$; see \cite{Nov2016} and references therein.

In the present work, in view of the aforementioned non-uniqueness for the problem of finding $v$ from $|f|^2$, we consider the modified phaseless inverse scattering problem formulated below as Problem \ref{in.probpisc}. Let
\begin{equation}
        S = \{ |f|^2, |f_1|^2, \ldots, |f_m|^2\},
\end{equation}
where $f$ is the scattering amplitude for $v$ and $f_1$, \ldots, $f_m$ are the scattering amplitudes for $v_1$, \dots, $v_m$, where
\begin{equation}\label{in.defvj}
        v_j = v + w_j, \quad j = 1,\ldots, m,
\end{equation}
where $w_1$, \dots, $w_m$ are additional a priori known background scatterers such that
\begin{equation}\label{in.wjass}
\begin{gathered}
        w_j \in L^\infty(\mathbb R^d), \quad \supp w_j \subset \Omega_j,\\
        \text{$\Omega_j$ is an open bounded domain in $\mathbb R^d$}, \quad \Omega_j \cap D = \varnothing,\\
        w_j \neq 0, \quad \text{$w_{j_1} \neq w_{j_2}$ if $j_1 \neq j_2$ (in $L^\infty(\mathbb R^d$)),}   
\end{gathered}
\end{equation}
where $j$, $j_1$, $j_2 \in \{1,\ldots,m\}$. Thus, $S$ consists of the phaseless scattering data $|f|^2$, $|f_1|^2$, \dots, $|f_m|^2$ measured sequentially, first, for the unknown scatterer $v$ and then for $v$ in the presence of known scatterer $w_j$ disjoint from $v$ for $j = 1$, \dots, $m$.

Actually, in the present work we continue studies of \cite{Nov2016} on the following inverse scattering problem for equation \eqref{in.probpisc}:

\begin{problem}\label{in.probpisc} Reconstruct potential $v$ from the phaseless scattering data $S$ for some appropriate background scatterers $w_1$, \ldots, $w_m$.
\end{problem}

Studies of Problem \ref{in.probpisc} in dimension $d \geq 2$ were started in \cite{Nov2016}. In dimension $d = 1$ for $m = 1$ studies of Problem \ref{in.probpisc} were started earlier in \cite{Akto1998}, where phaseless scattering data was considered for all $E>0$.

Actually, the key result of \cite{Nov2016} consists in a proper extension of formula \eqref{in.vborn} for the Fourier transform $\widehat v$ of $v$ to the phaseless case of Problem \ref{in.probpisc}; see Section~\ref{sec.ex}.

In the present work we proceed from the aforementioned result of \cite{Nov2016} and study related approximate reconstruction of $v$ in the configuration space. In this connection our results consist in obtaining related error estimates in the configuration space at high energies $E$; see Section \ref{sec.er}.

In addition, results of the present work are necessary for extending the iterative algorithm of \cite{Nov2015en} to the phaseless case of Problem \ref{in.probpisc}. The latter extension will be given in \cite{Agal2016a}.

\section{Extension of formula \eqref{in.vborn} to the phaseless case}\label{sec.ex}

Actually, the key result of \cite{Nov2016} consists in the following formulas for solving Problem \ref{in.probpisc} in dimension $d \geq 2$ for $m = 2$ at high energies $E$:

\begin{gather}
\label{in.phlBorn}
    \begin{pmatrix}
       \Re \widehat v \\
       \Im \widehat v
    \end{pmatrix} = 
    \frac 1 2 \begin{pmatrix} 
                \Re \widehat w_1 & \Im \widehat w_1 \\
                \Re \widehat w_2 & \Im \widehat w_2
              \end{pmatrix}^{-1}
    \begin{pmatrix}
        |\widehat v_1|^2 - |\widehat v|^2 - |\widehat w_1|^2 \\
        |\widehat v_2|^2 - |\widehat v|^2 - |\widehat w_2|^2
    \end{pmatrix}, \\
    \begin{gathered}\label{in.phlBornAbs}
        |\widehat v_j(p)|^2 = |f_j(k,l)|^2  + O(E^{-\frac 1 2}), \quad E \to + \infty, \\
        p \in \mathbb R^d, \quad (k,l) \in \mathcal M_E, \quad k-l = p, \quad j = 0,1,2,
    \end{gathered}
\end{gather}
where:
\begin{itemize}
 \item $v_0 = v$, $v_j$ is defined by \eqref{in.defvj}, $j = 1$, $2$, and $f_0 = f$, $f_1$, $f_2$ are the scattering amplitudes for $v_0$, $v_1$, $v_2$, respectively;
 \item $\widehat v = \widehat v(p)$, $\widehat v_j = \widehat v_j(p)$, $\widehat w_j = \widehat w_j(p)$, $p \in \mathbb R^d$, are the Fourier transforms of $v$, $v_j$, $w_j$ (defined as in \eqref{in.fourier}); 
 \item formula \eqref{in.phlBorn} is considered for all $p \in \mathbb R^d$ such that the determinant
\begin{equation}\label{in.zetanot0}
   \zeta_{\widehat w_1,\widehat w_2}(p) \overset{def}{=\joinrel=} \Re \widehat w_1(p) \Im \widehat w_2(p) - \Im \widehat w_1(p) \Re \widehat w_2(p) \neq 0.
\end{equation}
\end{itemize}

The point is that using formulas \eqref{in.phlBornAbs} for $d \geq 2$ with
\begin{equation}\label{in.kElEdef}
    \begin{gathered}
        k = k_E(p) = \tfrac p 2 + \bigl(E - \tfrac{p^2}{4}\bigr)^{1/2} \gamma(p), \\
        l = l_E(p) = -\tfrac p 2 + \bigl(E - \tfrac{p^2}{4}\bigr)^{1/2} \gamma(p), \\
        |\gamma(p)| = 1, \quad \gamma(p) p = 0,
    \end{gathered}
\end{equation}
where $p \in \mathbb R^d$, $|p| \leq 2\sqrt E$, one can reconstruct $|\widehat v|^2$, $|\widehat v_1|^2$, $|\widehat v_2|^2$ from $S$ at high energies for any $p \in \mathbb R^d$. And then using formula \eqref{in.phlBorn} one can reconstruct $\widehat v$ completely, provided that condition $\eqref{in.zetanot0}$ is fulfilled for almost all $p \in \mathbb R^d$.

\begin{remark} Formulas \eqref{in.phlBornAbs} can be precised as formula (2.15) of \cite{Nov2016}:
\begin{equation}\label{in.estconsts}
    \begin{gathered}
    \bigl| |\widehat v_j(p)|^2 - |f_j(k,l)|^2 \bigr| \leq c(D_j)N_j^3 E^{-\frac 1 2},\\
    p = k - l,\; (k,l) \in \mathcal M_E, \; E^{\frac 1 2} \geq \rho(D_j,N_j), \; j = 0,1,2,
    \end{gathered}
\end{equation}
where $\|v_j\|_{L^\infty(D_j)} \leq N_j$, $j = 0$, $1$, $2$, and $D_0 = D$, $D_j = D \cup \Omega_j$, $j = 1$, $2$, and constants $c$, $\rho$ are given by formulas (3.10) and (3.11) in \cite{Nov2016} (and, in particular, $\rho \geq 1$).
\end{remark}

In addition, from the experimental point of view it seems to be, in particular, convenient to consider Problem \ref{in.probpisc} with $m = 2$ for the case when $w_2$ is just a translation of $w_1$:
\begin{equation}
    w_2(x) = w_1(x-y), \quad x \in \mathbb R^d, \; y \in \mathbb R^d.
\end{equation}
In this case
\begin{equation}
    \widehat w_2(p) = e^{ipy} \widehat w_1(p), \quad \zeta_{\widehat w_1,\widehat w_2}(p) = \sin(py)|\widehat w_1(p)|^2, \quad p \in \mathbb R^d.
\end{equation}

On the level of analysis, the principal complication of \eqref{in.phlBorn}, \eqref{in.phlBornAbs} in comparison with \eqref{in.vborn} consists in possible zeros of the determinant $\zeta_{\widehat w_1,\widehat w_2}$ of \eqref{in.zetanot0}. For some simplest cases, we study these zeros in the next section.

\section{Zeros of the determinant $\zeta_{\widehat w_1,\widehat w_2}$}\label{sec.zeros}

Let 
\begin{equation}\label{det.defZ}
\begin{aligned}
    Z_{\widehat w_1,\widehat w_2} & = \bigl\{ p \in \mathbb R^d \colon \zeta_{\widehat w_1,\widehat w_2}(p) = 0 \bigr\}, \\
    Z_{\widehat w_j} & = \bigl\{ p \in \mathbb R^d \colon \widehat w_j(p) = 0 \bigr\}, \quad j = 1,2,
\end{aligned}
\end{equation}
where $\zeta$ is defined by \eqref{in.zetanot0}. From \eqref{in.zetanot0}, \eqref{det.defZ} it follows that 
\begin{equation}\label{det.Zint}
    Z_{\widehat w_1} \cup Z_{\widehat w_2} \subseteq Z_{\widehat w_1,\widehat w_2}.
\end{equation}
In view of \eqref{det.Zint}, in order to construct examples of $w_1$, $w_2$ such that the set $Z_{\widehat w_1,\widehat w_2}$ is as simple as possible, we use the following lemma:
\begin{lemma}\label{det.lemw} Let
\begin{gather}
    w(x) = |x|^\nu K_\nu(|x|) \int_{\mathbb R^d} q(x-y) q(y) \, dy, \quad x \in \mathbb R^d, \; \nu > 0, \label{det.defw} \\
    K_\nu(s) = \frac{\Gamma(\tfrac 1 2+\nu)}{\sqrt \pi}\left(\frac 2 s\right)^\nu \int_0^\infty \frac{\cos(s t) \, dt}{(1+t^2)^{\frac 1 2+\nu}}, \quad s > 0, \label{det.defKnu}\\
    \begin{gathered}\label{det.qprop}
        q \in L^\infty(\mathbb R^d), \; q = \overline q, \; \text{$q \neq 0$ in $L^\infty(\mathbb R^d)$},\\
        \text{$q(x) = 0$ if $|x|>r$}, \; q(x) = q(-x), \; x \in \mathbb R^d. 
    \end{gathered}
\end{gather}
Then
\begin{equation}\label{det.wprop}
    \begin{gathered}
        w \in C(\mathbb R^d), \; w = \overline w, \; \text{$w(x) = 0$ if $|x| > 2r$}, \; x \in \mathbb R^d, \\
        \widehat w(p) = \overline{\widehat w(p)}  \geq c_1 (1+|p|)^{-\beta}, \quad p \in \mathbb R^d, 
    \end{gathered}
\end{equation}
for $\beta=d+2\nu$ and some positive constant $c_1 = c_1(q,\nu)$, where $\widehat w$ is the Fourier transform of $w$. In addition, if $q \geq 0$, then $w \geq 0$.
\end{lemma}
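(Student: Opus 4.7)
The plan is to split the lemma into elementary properties (handled directly from the definitions) and the Fourier lower bound (which requires identifying $G(x):=|x|^\nu K_\nu(|x|)$ with a constant multiple of a Bessel potential).

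First I would record the elementary part. Set $h=q\ast q$ and $w=Gh$. Since $q$ is real, even, bounded, and supported in $\{|y|\le r\}$, the convolution $h$ is real, even, continuous, and supported in $\{|x|\le 2r\}$. From the integral representation \eqref{det.defKnu}, dominated convergence (valid because $\nu>0$ makes $(1+t^2)^{-1/2-\nu}$ integrable) shows that $s\mapsto s^\nu K_\nu(s)$ extends continuously to $[0,\infty)$, is real, and is strictly positive; hence $G\in C(\mathbb R^d)$ is real and positive. Consequently $w\in C(\mathbb R^d)$ is real, even, and supported in $\{|x|\le 2r\}$, and $q\ge 0$ forces $h\ge 0$ and $w\ge 0$. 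Since $w$ is real and even, its Fourier transform is real, which gives $\widehat w(p)=\overline{\widehat w(p)}$.

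The core of the proof is the lower bound. I would invoke the classical Bessel potential identity, adjusted to the Fourier convention \eqref{in.fourier}:
\[
\widehat G(p)=c_0\,(1+|p|^2)^{-\nu-d/2},\qquad p\in\mathbb R^d,
\]
where $c_0=c_0(\nu,d)=2^{\nu+d/2-1}\Gamma(\nu+d/2)/(2\pi)^{d/2}>0$. Using Fourier inversion to replace $G(x)$ by its spectral representation in the definition of $\widehat w$, together with $\widehat h=(2\pi)^d\widehat q^{\,2}$ (a direct consequence of the convolution/product duality and of $q$ being real and even so that $\widehat q$ is real), Fubini yields
\[
\widehat w(p)=c_0(2\pi)^d\int_{\mathbb R^d}\frac{\widehat q(q)^2}{(1+|p-q|^2)^{\nu+d/2}}\,dq,
\]
an absolutely convergent integral with non-negative integrand.

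It remains to extract a polynomial lower bound. Because $q$ is compactly supported and non-zero, $\widehat q$ is a non-zero entire function, so there exist $q_0\in\mathbb R^d$, $\delta>0$ and $c_q>0$ with $\widehat q(q)^2\ge c_q$ for $q\in B(q_0,\delta)$. For any $p\in\mathbb R^d$ and $q\in B(q_0,\delta)$ one has $|p-q|\le |p|+|q_0|+\delta$, whence $(1+|p-q|^2)^{\nu+d/2}\le c'(1+|p|)^{d+2\nu}$ for a constant $c'$ depending only on $q_0,\delta,\nu,d$. Restricting the integral above to $B(q_0,\delta)$ gives
\[
\widehat w(p)\ge\frac{c_0(2\pi)^d c_q\,\mathrm{vol}(B(q_0,\delta))}{c'}\,(1+|p|)^{-(d+2\nu)},
\]
which is \eqref{det.wprop} with $\beta=d+2\nu$.

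The step I expect to be the most delicate is the Bessel-potential identification of $\widehat G$, since it has to be reconciled with the paper's non-standard $(2\pi)^{-d}$-Fourier convention and with the specific normalisation of $K_\nu$ in \eqref{det.defKnu}; the remaining steps are essentially bookkeeping once that identity and the non-vanishing of $\widehat q$ on an open set are in hand.
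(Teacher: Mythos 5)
Your proof is correct and follows essentially the same route as the paper's: both write $\widehat w$ as the convolution of $|\widehat q\,|^2$ with the Fourier transform of $|x|^\nu K_\nu(|x|)$, identify that transform as the Bessel-potential kernel $c\,(1+|p|^2)^{-d/2-\nu}$, and obtain the lower bound by restricting the convolution integral to a region where $\widehat q$ is bounded away from zero (the paper takes the unit ball and derives the kernel formula by direct computation from \eqref{det.defKnu}, whereas you cite the classical identity and choose a ball $B(q_0,\delta)$ via analyticity of $\widehat q$). The only discrepancy is an inessential normalisation constant (the factor $(2\pi)^d$ from $\widehat{q*q}=(2\pi)^d\widehat q^{\,2}$), which does not affect the conclusion.
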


We recall that $K_\nu$ defined by \eqref{det.defKnu} is the modified Bessel function of the second kind and order $\nu$. In addition, $\Gamma$ denotes the gamma function.

Lemma \ref{det.lemw} is proved in Section \ref{sec.l1p}.

As a corollary of Lemma \ref{det.lemw}, functions 
\begin{equation}\label{det.wjTj}
    w_j(x) = w(x-T_j), \quad x \in \mathbb R^d, \; T_j \in \mathbb R^d,
\end{equation}
where $w$ is constructed in Lemma \ref{det.lemw}, give us examples of $w_j$ satisfying \eqref{in.wjass} for fixed $D$, $\Omega_j$ and for appropriate radius $r$ of Lemma \ref{det.lemw} and translations $T_j$ of \eqref{det.wjTj}, and such that 
\begin{equation}
    \begin{gathered}
    Z_{\widehat w_j} = \varnothing, \\
    |\widehat w_j(p)| = \widehat w(p) \geq c_1(1+|p|)^{-\beta}, \quad p \in \mathbb R^d,
    \end{gathered}
\end{equation}
where $c_1$, $\beta$ are the same as in \eqref{det.wprop}. In addition,
\begin{equation}\label{det.defZw1w2}
     \begin{gathered}
         \zeta_{\widehat w_1,\widehat w_2}(p) = \sin(py) |\widehat w(p)|^2, \quad y = T_2 - T_1 \neq 0, \quad p \in \mathbb R^d, \\
         Z_{\widehat w_1,\widehat w_2} = \bigl\{ p \in \mathbb R^d \colon \sin(py) = 0 \bigr\} = \bigl\{ p \in \mathbb R^d \colon py \in \pi \mathbb Z \bigr\},
     \end{gathered}
\end{equation}
for $w_1$, $w_2$ of \eqref{det.wjTj}.

As another corollary of Lemma \ref{det.lemw}, we have that
\begin{equation}
    \begin{gathered}\label{det.w2=iw1}
      \text{if $w_1$ is defined as in \eqref{det.wjTj} and $w_2 = iw_1$, then}\\
        \zeta_{\widehat w_1,\widehat w_2}(p) = |\widehat w(p)|^2 \geq c_1^2 (1+|p|)^{-2\beta}, \quad p \in \mathbb R^d, \\
        Z_{\widehat w_1,\widehat w_2} = \varnothing.
    \end{gathered}
\end{equation}

We recall that complex-valued $v$ and $w_j$ naturally arise if we interpret equation \eqref{in.schreq} for fixed $E$ as the Helmholtz equation of acoustics or electrodynamics.

Finally, note that 
\begin{equation}\label{det.defZw1..wd}
    \begin{gathered}
    Z_{\widehat w_1,\ldots,\widehat w_{d+1}} = \tfrac{\pi}{s} \mathbb Z^d, \quad \text{where} \\
    Z_{\widehat w_1,\ldots,\widehat w_{d+1}} = Z_{\widehat w_1,\widehat w_2} \cap Z_{\widehat w_1,\widehat w_3} \cap \cdots \cap Z_{\widehat w_1,\widehat w_{d+1}},
    \end{gathered}
\end{equation}
if $w_1$ is defined as in \eqref{det.wjTj}, and
\begin{equation}\label{det.defw1wd}
w_2(x) = w_1(x-s e_1), \ldots, w_{d+1}(x) = w_1(x - s e_d),
\end{equation}
where $(e_1,\ldots,e_d)$ is the standard basis of $\mathbb R^d$ and $s>0$.

Thus, in principle, for Problem \ref{in.probpisc} with background scatterers $w_1$, \dots, $w_{d+1}$ as in \eqref{det.defw1wd}, for each $p \in \mathbb R^d \setminus \tfrac{\pi}{s}\mathbb Z^d$ formulas \eqref{in.phlBorn}, \eqref{in.phlBornAbs} can be used with appropriate $w_j$ in place of $w_2$, where $j = 2$, \dots, $d+1$.

\section{Error estimates in the configuration space}\label{sec.er}

We recall that for inverse scattering with phase information the scattering amplitude $f$ on $\mathcal M_E$ processed by \eqref{in.vborn} and the inverse Fourier transform yield the approximate reconstruction
\begin{equation}\label{er.utov}
    u(\cdot,E) = v + O(E^{-\alpha}) \quad \text{in $L^\infty(D)$ as $E\to+\infty$}, \;\; \alpha = \frac{n-d}{2n},
\end{equation}
if $v \in W^{n,1}(\mathbb R^d)$, $n > d$ (in addition to the initial assumption \eqref{in.vprop}), where $W^{n,1}(\mathbb R^d)$ denotes the standard Sobolev space of $n$-times differentiable functions in $L^1(\mathbb R^d)$:
\begin{equation}\label{er.defWn1}
  \begin{gathered}
    W^{n,1}(\mathbb R^d) = \bigl\{ u \in L^1(\mathbb R^d) \colon \|u\|_{n,1} < \infty \bigr\}, \\
    \|u\|_{n,1} = \max\limits_{|J| \leq n} \left\| \frac{\partial^{|J|} u}{\partial x^J}\right\|_{L^1(\mathbb R^d)}, \quad n \in \mathbb N \cup \{0\}.
  \end{gathered}
\end{equation}
More precisely, the approximation $u(\cdot,E)$ in \eqref{er.utov} is defined by
\begin{equation}\label{er.ucl}
    \begin{gathered}
    u(x,E) = \int\limits_{\mathcal B_{r(E)}} e^{-ipx} f(k_E(p),l_E(p)) \, dp, \quad x \in D, \\
    r(E) = 2\tau E^{\frac{\alpha}{n-d}} \quad \text{for some fixed $\tau \in (0,1]$},
    \end{gathered}
\end{equation}
where 
\begin{equation}\label{er.defBr}
    \mathcal B_r = \bigl\{ p \in \mathbb R^d \colon |p| \leq r \bigr\},
\end{equation}
$\alpha$ is defined in \eqref{er.utov}, and $k_E(p)$, $l_E(p)$ are defined as in \eqref{in.kElEdef} with some piecewise continuous vector-function $\gamma$ on $\mathbb R^d$; see, e.g., \cite{Nov2015en}. In addition, estimate \eqref{er.utov} can be precised as 
\begin{equation}
  |u(x,E) - v(x)| \leq A(D,N,M,d,n,\tau) E^{-\alpha}, \quad x \in D, \; E^{\frac 1 2} \geq \rho(D,N),
\end{equation}
where $\|v\|_{L^\infty(D)} \leq N$, $\|v\|_{n,1} \leq M$, $\rho$ is the same as in \eqref{in.estconsts} and the expression for $A$ can be found in formula (3.10) of \cite{Nov2015en}.

Analogs of $u(\cdot,E)$ for the phaseless case are given below in this section. In particular, related formulas depend on the zeros of determinant $\zeta_{\widehat w_1,\widehat w_2}$ of \eqref{in.zetanot0}.

We consider
\begin{gather}
  \begin{gathered}\label{er.defU}
    U_{\widehat w_1,\widehat w_2} = \Re U_{\widehat w_1,\widehat w_2} + i \Im U_{\widehat w_1,\widehat w_2}, \\
    \begin{pmatrix} 
        \Re U_{\widehat w_1,\widehat w_2}(p,E) \\
        \Im U_{\widehat w_1,\widehat w_2}(p,E)
    \end{pmatrix} = \tfrac 1 2 M_{\widehat w_1,\widehat w_2}^{-1}(p) b_{\widehat w_1,\widehat w_2}(p,E),
    \end{gathered}\\
    M_{\widehat w_1,\widehat w_2}(p) = \begin{pmatrix} 
                \Re \widehat w_1(p) & \Im \widehat w_1(p) \\
                \Re \widehat w_2(p) & \Im \widehat w_2(p)
              \end{pmatrix},\label{er.defM}\\
    M^{-1}_{\widehat w_1, \widehat w_2}(p) = \frac{1}{\zeta_{\widehat w_1,\widehat w_2}(p)} \begin{pmatrix} 
                \Im \widehat w_2(p) & -\Im \widehat w_1(p) \\ 
                -\Re \widehat w_2(p) & \Re \widehat w_1(p) 
                \end{pmatrix},\label{er.defMi}\\
    b_{\widehat w_1,\widehat w_2}(p,E) = \begin{pmatrix}
                   |f_1(p,E)|^2 - |f(p,E)|^2 - |\widehat w_1(p)|^2 \\
                   |f_2(p,E)|^2 - |f(p,E)|^2 - |\widehat w_2(p)|^2 
              \end{pmatrix}, \label{er.defb} \\
   f(p,E) = f(k_E(p),l_E(p)), \; f_j(p,E) = f_j(k_E(p),l_E(p)), \; j = 1,2, \label{er.fpviafkfl}
\end{gather}
where $\widehat w_1$, $\widehat w_2$, $f$, $f_1$, $f_2$ are the same as in \eqref{in.phlBorn}, \eqref{in.phlBornAbs}, $\zeta_{\widehat w_1,\widehat w_2}$ is defined by \eqref{in.zetanot0}, $k_E(p)$, $l_E(p)$ are the same as in \eqref{in.kElEdef}, \eqref{er.ucl}, and $p \in \mathcal B_{2\sqrt E}$, $d \geq 2$.

For Problem \ref{in.probpisc} for $d \geq 2$, $m = 2$, and for the case when $\zeta_{\widehat w_1,\widehat w_2}$ has no zeros (the case of \eqref{det.w2=iw1} in Section \ref{sec.zeros}) we have the following result:

\begin{theorem}\label{thm.w2=iw1} Let $v$ satisfy \eqref{in.vprop} and $v \in W^{n,1}(\mathbb R^d)$ for some $n > d$. Let $w_1$, $w_2$ be the same as in \eqref{det.w2=iw1}. Let
\begin{equation}\label{er.defr1}
    \begin{gathered}
    u(x,E) = \int\limits_{\mathcal B_{r_1(E)}} e^{-ipx} U_{\widehat w_1,\widehat w_2}(p,E) \, dp, \quad x \in D, \\
    r_1(E) = 2\tau E^\frac{\alpha_1}{n-d}, \quad \alpha_1 = \frac{n-d}{2(n+\beta)}, \quad \text{for some fixed $\tau \in (0,1]$},
    \end{gathered}
\end{equation}
where $U_{\widehat w_1,\widehat w_2}$ is defined by \eqref{er.defU}, $\mathcal B_r$ is defined by \eqref{er.defBr}, $\beta$ is the number of \eqref{det.wprop}, \eqref{det.w2=iw1}. Then 
\begin{equation}\label{er.t1uer}
  \begin{gathered}
    u(\cdot,E) = v + O(E^{-\alpha_1}) \quad \text{in $L^\infty(D)$}, \; E \to + \infty,\\
    |u(x,E)-v(x)| \leq A_1 E^{-\alpha_1},  \quad x \in D, \; E^{\frac 1 2} \geq \rho_1,
  \end{gathered}
\end{equation}
where $\rho_1$ and $A_1$ are defined in formulas \eqref{t1p.estvviaU} and \eqref{t1p.t1uer} of Section \ref{sec.t1p}.
\end{theorem}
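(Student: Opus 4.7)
The plan is to split the reconstruction error into a high-frequency tail and a low-frequency error, and to exploit the special structure $w_2 = iw_1$ in order to sharply bound the latter. Since $v \in W^{n,1}(\mathbb R^d)$ with $n>d$ implies the pointwise bound $|\widehat v(p)| \leq C_n\|v\|_{n,1}(1+|p|)^{-n}$, the Fourier inversion formula $v(x) = \int_{\mathbb R^d} e^{-ipx}\widehat v(p)\,dp$ applies, so I can decompose
\[
  u(x,E) - v(x) \; = \int_{\mathcal B_{r_1(E)}} e^{-ipx}\bigl[U_{\widehat w_1,\widehat w_2}(p,E) - \widehat v(p)\bigr] dp \; - \int_{|p|>r_1(E)} e^{-ipx}\widehat v(p)\,dp.
\]
The second term (the high-frequency tail) is bounded in the standard way by integrating the $(1+|p|)^{-n}$ bound in spherical coordinates, giving at most $C' r_1(E)^{-(n-d)}$ uniformly in $x$.

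For the low-frequency term I would first observe the algebraic identity underlying \eqref{in.phlBorn}: expanding $|\widehat v + \widehat w_j|^2 = |\widehat v|^2 + 2\Re(\widehat v\overline{\widehat w_j}) + |\widehat w_j|^2$ shows that if $b_{\widehat w_1,\widehat w_2}(p,E)$ is replaced by the exact vector $b_{\mathrm{ex}}(p) := (|\widehat v_1|^2 - |\widehat v|^2 - |\widehat w_1|^2,\, |\widehat v_2|^2 - |\widehat v|^2 - |\widehat w_2|^2)^T$, the definition of $U_{\widehat w_1,\widehat w_2}$ returns exactly $(\Re\widehat v,\Im\widehat v)^T$. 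Therefore
\[
   U_{\widehat w_1,\widehat w_2}(p,E) - \widehat v(p) \; = \; \tfrac 1 2 M^{-1}_{\widehat w_1,\widehat w_2}(p)\bigl(b_{\widehat w_1,\widehat w_2}(p,E) - b_{\mathrm{ex}}(p)\bigr),
\]
and the components of the parenthesized difference are $O(E^{-1/2})$ pointwise in $p$ by estimate \eqref{in.estconsts} applied for $j=0,1,2$, valid as soon as $E^{1/2} \geq \max_j \rho(D_j,N_j)$.

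The main technical point is the bound on $\|M^{-1}(p)\|$ in the presence of small $\widehat w(p)$. Because $w_2 = iw_1$ gives $\widehat w_2 = i\widehat w_1$, the matrix $M_{\widehat w_1,\widehat w_2}(p)$ is the $2\times 2$ real representation of complex multiplication by $\widehat w_1(p) = \widehat w(p)$, so its inverse represents multiplication by $1/\widehat w(p)$. The lower bound $|\widehat w(p)| \geq c_1(1+|p|)^{-\beta}$ of Lemma~\ref{det.lemw} therefore yields $\|M^{-1}(p)\| \leq c_1^{-1}(1+|p|)^\beta$ directly (without picking up an extra factor of $(1+|p|)^\beta$ from the cofactor). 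Integrating the pointwise bound on $U_{\widehat w_1,\widehat w_2}(p,E) - \widehat v(p)$ over $\mathcal B_{r_1(E)}$ then produces a low-frequency contribution of order $r_1(E)^{d+\beta}E^{-1/2}$.

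Adding the two contributions gives an error of order $r_1(E)^{-(n-d)} + r_1(E)^{d+\beta} E^{-1/2}$. Equating the two terms with $r_1(E) = 2\tau E^{\alpha_1/(n-d)}$ requires $\alpha_1(d+\beta)/(n-d) - 1/2 = -\alpha_1$, which gives exactly $\alpha_1 = (n-d)/(2(n+\beta))$ and establishes the rate $O(E^{-\alpha_1})$. The main obstacle, and the main remaining work, is the explicit bookkeeping of $A_1$ and $\rho_1$: one has to combine the Sobolev constant $C_n\|v\|_{n,1}$, the constant $c_1$ from Lemma~\ref{det.lemw}, and the constants $c(D_j)$, $\rho(D_j,N_j)$, $N_j$ from \eqref{in.estconsts}, and in addition verify that $r_1(E) \leq 2\sqrt E$ for all $E^{1/2} \geq \rho_1$ so that $k_E(p)$, $l_E(p)$ and hence $U_{\widehat w_1,\widehat w_2}(p,E)$ are defined throughout the integration domain.
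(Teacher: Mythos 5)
Your proposal is correct and follows essentially the same route as the paper: the paper's Proposition~\ref{prop.Uest} is exactly your pointwise bound $|\widehat v(p)-U_{\widehat w_1,\widehat w_2}(p,E)|\leq c_2|\widehat w(p)|^{-1}E^{-1/2}$ (obtained from the exactness of \eqref{in.phlBorn}, the Born estimates \eqref{in.estconsts} and the conformal structure of $M_{\widehat w_1,\widehat w_2}$ when $w_2=iw_1$), and the paper then performs the same tail/low-frequency split and the same balancing of $r^{d-n}$ against $E^{-1/2}r^{d+\beta}$ to get $\alpha_1$. The only content you defer --- the explicit constants $c_3$, $c_4$, $A_1$ and the check $1\leq r_1(E)\leq 2\sqrt E$ for $E^{1/2}\geq\rho_1\geq 1$ --- is indeed just the bookkeeping carried out in \eqref{t1p.v-est}--\eqref{t1p.t1uer}.
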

Theorem \ref{thm.w2=iw1} is proved in Section \ref{sec.t1p}.

Next, we set
\begin{equation}\label{er.Zew1w2def}
    Z^\varepsilon_{\widehat w_1,\widehat w_2} = \bigl\{ p \in \mathbb R^d \colon py \in (-\varepsilon,\varepsilon) + \pi \mathbb Z \bigr\}, \quad y \in \mathbb R^d \setminus 0, \;\; 0 < \varepsilon < 1,
\end{equation}    
where $\widehat w_1$, $\widehat w_2$ and $y$ are the same as in \eqref{det.wjTj}--\eqref{det.defZw1w2}. One can see that $Z^\varepsilon_{\widehat w_1,\widehat w_2}$ is the open $\tfrac{\varepsilon}{|y|}$-neighborhood of $Z_{\widehat w_1,\widehat w_2}$ defined in \eqref{det.defZw1w2}.

Note that
\begin{equation}\label{er.defzt2}
  \begin{gathered}
 \text{for any $p \in Z^\varepsilon_{\widehat w_1,\widehat w_2}$ there exists}\\
  \text{the unique $z(p) \in \mathbb Z$ such that $|py - \pi z(p)| < \varepsilon$}.
  \end{gathered}
\end{equation}
In addition to $U_{\widehat w_1,\widehat w_2}$ of \eqref{er.defU}, we define
\begin{equation}\label{er.defUe}
  \begin{gathered}
    U^\varepsilon_{\widehat w_1,\widehat w_2}(p,E) = \tfrac 1 2 \bigl( U_{\widehat w_1,\widehat w_2}(p^\varepsilon_-,E)+U_{\widehat w_1,\widehat w_2}(p^\varepsilon_+,E) \bigr), \\
    p^\varepsilon_\pm = p_\bot + \pi z(p) \tfrac{y}{|y|^2} \pm \varepsilon \tfrac{y}{|y|^2}, \quad p_\bot = p - (py)\tfrac{y}{|y|^2}, \quad p \in \mathcal B_{2\sqrt E} \cap Z^\varepsilon_{\widehat w_1,\widehat w_2},
  \end{gathered}
\end{equation}
where $z(p)$ is the integer number of \eqref{er.defzt2}. The geometry of vectors $p$, $p_\bot$, $y$, $p^\varepsilon_\pm$ is illustrated in Fig. \ref{fig.defppm} for the case when the direction of $y$ coincides with the basis vector $e_1 = (1,0,\dots,0)$.

\begin{figure}
\begin{center}
\includegraphics[width=.7\linewidth]{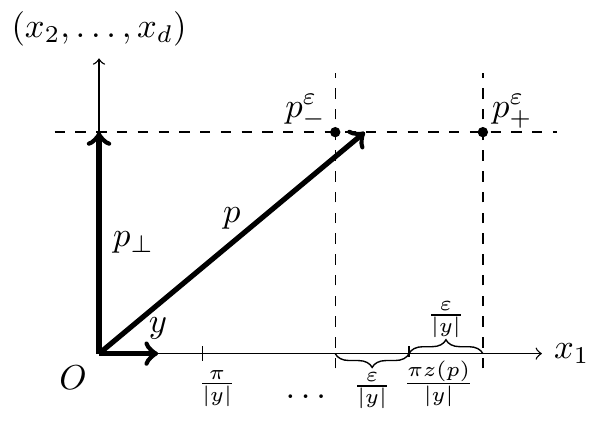}
\end{center}
\caption{Vectors $p$, $p_\bot$, $y$ and $p_\pm^\varepsilon$ of formula \eqref{er.defUe}}
\label{fig.defppm}
\end{figure}

For Problem \ref{in.probpisc} for $d \geq 2$, $m = 2$, and for the case when $\zeta_{\widehat w_1,\widehat w_2}$ has zeros on hyperplanes (the case of \eqref{det.defZw1w2} in Section \ref{sec.zeros}) we have the following result:

\begin{theorem}\label{thm.w2=w1sh} Let $v$ satisfy \eqref{in.vprop} and $v \in W^{n,1}(\mathbb R^d)$ for some $n > d$. Let $w_1$, $w_2$ be the same as in \eqref{det.wjTj}--\eqref{det.defZw1w2}. Let
\begin{equation}\label{er.defr2}
  \begin{gathered}
  u(x,E) = u_1(x,E) + u_2(x,E), \quad x \in D, \\
  u_1(x,E) = \int\limits_{ \mathcal B_{r_2(E)} \setminus Z^{\varepsilon_2(E)}_{\widehat w_1,\widehat w_2}} e^{-ipx} U_{\widehat w_1,\widehat w_2}(p,E) \, dp,  \\
  u_2(x,E) = \int\limits_{ \mathcal B_{r_2(E)} \cap Z^{\varepsilon_2(E)}_{\widehat w_1,\widehat w_2}} e^{-ipx}  U^\varepsilon_{\widehat w_1,\widehat w_2}(p,E) \, dp, \\
  r_2(E) = 2 \tau E^{\frac{\alpha_2}{n-d}}, \quad \varepsilon_2(E)= E^{-\tfrac{\alpha_2}{2}}, \\
   \alpha_2 = \tfrac{n-d}{2\bigl(n+\beta+\tfrac{n-d}{2}\bigr)}, \quad \text{for some fixed $\tau \in (0,1]$,}
  \end{gathered}
\end{equation}
where $U_{\widehat w_1,\widehat w_2}$ and $U^\varepsilon_{\widehat w_1,\widehat w_2}$ are defined by \eqref{er.defU}, \eqref{er.defUe}, $\mathcal B_r$ and $Z^\varepsilon_{\widehat w_1,\widehat w_2}$ are defined by \eqref{er.defBr}, \eqref{er.Zew1w2def}, and $\beta$ is the number of \eqref{det.wprop}. Then 
\begin{equation}\label{er.t2uer}
  \begin{gathered}
    u(\cdot,E) = v + O(E^{-\alpha_2}) \quad \text{in $L^\infty(D)$}, \; E \to + \infty, \\
    |u(x,E)-v(x)| \leq A_2 E^{-\alpha_2},  \quad x \in D, \; E^{\frac 1 2} \geq \rho_2,
  \end{gathered}
\end{equation}
where $\rho_2$ and $A_2$ are defined in formulas \eqref{t2p.estvviaU} and \eqref{t2p.t2uer} of Section \ref{sec.t2p}.
\end{theorem}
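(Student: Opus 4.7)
The plan is to decompose $u(x,E) - v(x)$ into three pieces by writing $v(x) = \int_{\mathbb{R}^d} e^{-ipx} \widehat v(p)\,dp$ (per the convention of \eqref{in.fourier}) and splitting $\mathbb{R}^d$ as $(\mathbb{R}^d \setminus \mathcal B_{r_2(E)}) \sqcup (\mathcal B_{r_2(E)} \setminus Z^{\varepsilon_2(E)}_{\widehat w_1,\widehat w_2}) \sqcup (\mathcal B_{r_2(E)} \cap Z^{\varepsilon_2(E)}_{\widehat w_1,\widehat w_2})$. The key preparatory step is to prove the pointwise estimate
\[
|U_{\widehat w_1,\widehat w_2}(p,E) - \widehat v(p)| \leq C\,E^{-1/2}\, (1+|p|)^\beta /|\sin(py)|
\]
for $p \in \mathcal B_{2\sqrt E} \setminus Z_{\widehat w_1,\widehat w_2}$. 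To derive it, I would use \eqref{in.phlBorn} together with \eqref{er.defU}--\eqref{er.defb} to write $U_{\widehat w_1,\widehat w_2} - \widehat v = \tfrac12 M^{-1}_{\widehat w_1,\widehat w_2}(b - b^v)$, where the entries of $b - b^v$ are $|f_j|^2 - |\widehat v_j|^2 - (|f|^2 - |\widehat v|^2)$ and are $O(E^{-1/2})$ by \eqref{in.estconsts}. Under the setting $w_j(x)=w(x-T_j)$ of \eqref{det.wjTj}--\eqref{det.defZw1w2}, the function $\widehat w$ is real and $\geq c_1(1+|p|)^{-\beta}$ by Lemma~\ref{det.lemw}, so $|\zeta_{\widehat w_1,\widehat w_2}| = |\sin(py)|\widehat w^2$ and $|\Re\widehat w_j|, |\Im\widehat w_j| \leq \widehat w$; hence $\|M^{-1}_{\widehat w_1,\widehat w_2}\|$ is at most $1/(|\sin(py)|\widehat w) \leq c_1^{-1}(1+|p|)^\beta/|\sin(py)|$.

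With this bound in hand, the tail is controlled by $|\widehat v(p)| \leq C_v(1+|p|)^{-n}$ (integration by parts, using $v \in W^{n,1}$), giving a contribution of order $r_2^{d-n}$. On the good region, $|\sin(py)| \geq (2/\pi)\varepsilon$, so the pointwise estimate yields a contribution of order $E^{-1/2}\varepsilon^{-1} r_2^{d+\beta}$. For the bad region, I would split
\[
U^\varepsilon_{\widehat w_1,\widehat w_2}(p,E) - \widehat v(p) = \bigl(U^\varepsilon_{\widehat w_1,\widehat w_2}(p,E) - \widehat v^\varepsilon(p)\bigr) + \bigl(\widehat v^\varepsilon(p) - \widehat v(p)\bigr),
\]
with $\widehat v^\varepsilon(p) = \tfrac12(\widehat v(p^\varepsilon_+) + \widehat v(p^\varepsilon_-))$. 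The first summand is handled by applying the pointwise estimate at $p^\varepsilon_\pm$, which satisfy $|\sin((p^\varepsilon_\pm)y)| = \sin\varepsilon \geq (2/\pi)\varepsilon$; combined with the slab-measure bound $|\mathcal B_{r_2} \cap Z^\varepsilon| \leq C\varepsilon r_2^d$, this gives a contribution of order $E^{-1/2} r_2^{d+\beta}$.

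The crux is the second summand. The mean value theorem gives $|\widehat v^\varepsilon(p) - \widehat v(p)| \leq (2\varepsilon/|y|) \sup_{B(p,2\varepsilon/|y|)} |\nabla \widehat v|$; the naive pointwise bound $C\varepsilon$, integrated over the bad region, only yields $C\varepsilon^2 r_2^d$, which is too large and would force $n \geq 3d$. The crucial improvement exploits the compact support of $v$: since $v \in W^{n,1}$, also $xv \in W^{n,1}(\mathbb{R}^d)$ with norm controlled by $C(\operatorname{diam} D)\|v\|_{n,1}$, whence $|\nabla \widehat v(p)| \leq C(1+|p|)^{-n}$. Because $Z^\varepsilon$ is a union of parallel slabs of thickness $2\varepsilon/|y|$, Fubini gives
\[
\int_{Z^\varepsilon}(1+|p|)^{-n}\,dp \leq C\varepsilon \int_{\mathbb{R}^d}(1+|p|)^{-n}\,dp \leq C\varepsilon,
\]
the last integral being finite for $n > d$. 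Consequently the second summand contributes only $C\varepsilon^2$, independent of $r_2$.

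Summing, $|u(x,E) - v(x)| \leq C\bigl(r_2^{d-n} + E^{-1/2}\varepsilon^{-1} r_2^{d+\beta} + \varepsilon^2\bigr)$, and the prescribed choices $r_2(E) = 2\tau E^{\alpha_2/(n-d)}$, $\varepsilon_2(E) = E^{-\alpha_2/2}$, $\alpha_2 = (n-d)/(2(n+\beta)+(n-d))$ reduce all three terms to $O(E^{-\alpha_2})$, which is \eqref{er.t2uer}; explicit values of $\rho_2$ and $A_2$ are obtained by tracking the constants through \eqref{in.estconsts} and \eqref{det.wprop}. The principal difficulty is therefore the $\widehat v^\varepsilon - \widehat v$ estimate in the bad region, since without the decay $|\nabla \widehat v(p)| \leq C(1+|p|)^{-n}$ the admissible range of $n$ would shrink drastically.
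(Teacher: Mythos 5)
Your proposal follows essentially the same route as the paper's proof: the same three-way decomposition (Fourier tail, $\mathcal B_{r_2}\setminus Z^{\varepsilon}_{\widehat w_1,\widehat w_2}$, and $\mathcal B_{r_2}\cap Z^{\varepsilon}_{\widehat w_1,\widehat w_2}$), the same pointwise estimate $|U_{\widehat w_1,\widehat w_2}(p,E)-\widehat v(p)|\lesssim \varepsilon^{-1}(1+|p|)^{\beta}E^{-1/2}$ off the zero set via the explicit $M^{-1}_{\widehat w_1,\widehat w_2}$ and $|\sin(py)|\geq \tfrac{2\varepsilon}{\pi}$, the same splitting on the bad set into the error at $p^{\varepsilon}_{\pm}$ plus $\tfrac12\bigl(\widehat v(p^{\varepsilon}_-)+\widehat v(p^{\varepsilon}_+)\bigr)-\widehat v(p)$, and, crucially, the same exploitation of $x_jv\in W^{n,1}(\mathbb R^d)$ (compact support) to get $|\nabla\widehat v(\xi)|\lesssim(1+|\xi|)^{-n}$ followed by summation over the slabs, yielding the same three terms $r^{d-n}$, $\varepsilon^{-1}E^{-1/2}r^{d+\beta}$, $\varepsilon^{2}$ and the same balancing that produces $\alpha_2$. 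The argument is correct.
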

Theorem \ref{thm.w2=w1sh} is proved in Section \ref{sec.t2p}.

Next, we set 
\begin{equation}\label{er.Zew1..wd}
    \begin{gathered}
        Z^\varepsilon_{\widehat w_1,\ldots,\widehat w_{d+1}} = \mathcal B'_{\varepsilon/s} + \tfrac{\pi}{s}\mathbb Z^d, \quad 0 < \varepsilon < 1,\\
        \mathcal B'_r = \mathcal B_r \setminus \partial \mathcal B_r, \quad r>0,
    \end{gathered}
\end{equation}
where $\widehat w_1$, \dots, $\widehat w_{d+1}$ are the same as in \eqref{det.defZw1..wd}, \eqref{det.defw1wd}, and $\mathcal B_r$ is defined by \eqref{er.defBr}. One can see that $Z^\varepsilon_{\widehat w_1,\ldots,\widehat w_{d+1}}$ is the open $\frac{\varepsilon}{s}$-neighborhood of $Z_{\widehat w_1,\ldots,\widehat w_{d+1}}$ defined in \eqref{det.defZw1..wd}.

Note that 
\begin{equation}\label{er.defzt3}
    \begin{gathered}
        \text{for any $p \in Z^\varepsilon_{\widehat w_1,\ldots,\widehat w_{d+1}}$ there exists}\\
        \text{the unique $z(p) \in \mathbb Z^d$ such that $| sp - \pi z(p)| < \varepsilon$}.
    \end{gathered}
\end{equation}
In addition, we consider $i'$ such that
\begin{equation}\label{er.defi'}
    \begin{gathered}    
        i' = i'(p,s), \quad p = (p_1,\ldots,p_d) \in \mathbb R^d \setminus \tfrac{\pi}{s}\mathbb Z^d, \; s > 0,\\
    \text{$i'$ take values in $\{2,\ldots,d+1\}$},\\
    |\sin( s p_{i'-1} )| \geq |\sin(s p_{i-1})| \quad \text{for all $i \in \{2,\ldots,d+1\}$}.
    \end{gathered}
\end{equation}
Let 
\begin{gather}
    U_{\widehat w_1,\ldots,\widehat w_{d+1}}(p,E) = U_{\widehat w_1,\widehat w_{i'}}(p,E), \quad p \in \mathbb R^d \setminus \tfrac \pi s \mathbb Z^d, \label{er.defUw1..wd}\\
    \begin{gathered}\label{er.defUew1..wd}
    U^\varepsilon_{\widehat w_1,\ldots,\widehat w_{d+1}}(p,E) = \\
    \frac{1}{|\mathbb S^{d-1}|} \int\limits_{\mathbb S^{d-1}} U_{\widehat w_1,\ldots,\widehat w_{d+1}}(\tfrac \varepsilon s \vartheta + \tfrac{\pi}{s} z(p),E) \, d\vartheta, \quad
    p \in Z^\varepsilon_{\widehat w_1,\ldots,\widehat w_{d+1}},
    \end{gathered}\\
    \mathbb S^{d-1} = \bigl\{ p \in \mathbb R^d \colon |p| = 1 \bigr\},
\end{gather}
where $|\mathbb S^{d-1}|$ denotes the standard Euclidean volume of $\mathbb S^{d-1}$, $U_{\widehat w_1,\widehat w_{i'}}$ is defined in a similar way with $U_{\widehat w_1,\widehat w_2}$ of \eqref{er.defU}, $\widehat w_1$, \dots, $\widehat w_{d+1}$ are the same as in \eqref{det.defZw1..wd}, \eqref{det.defw1wd}, and $i'$ is the same as in \eqref{er.defi'}.

For Problem \ref{in.probpisc} for $d \geq 2$, $m = d+1$, we also have the following result:

\begin{theorem}\label{thm.w1..wd} Let $v$ satisfy \eqref{in.vprop} and $v \in W^{n,1}(\mathbb R^d)$ for some $n > d$. Let $w_1$, \dots, $w_{d+1}$ be the same as in \eqref{det.defw1wd}. Let
\begin{equation}\label{er.defr3}
  \begin{gathered}
  u(x,E) = u_1(x,E) + u_2(x,E), \quad x \in D, \\
  u_1(x,E) = \int\limits_{ \mathcal B_{r_3(E)} \setminus Z^{\varepsilon_3(E)}_{\widehat w_1,\dots,\widehat w_{d+1}}} e^{-ipx} U_{\widehat w_1,\dots,\widehat w_{d+1}}(p,E) \, dp,  \\
  u_2(x,E) = \int\limits_{ \mathcal B_{r_3(E)} \cap Z^{\varepsilon_3(E)}_{\widehat w_1,\dots,\widehat w_{d+1}}} e^{-ipx}  U^\varepsilon_{\widehat w_1,\dots,\widehat w_{d+1}}(p,E) \, dp, \\
  r_3(E) = 2 \tau E^{\frac{\alpha_3}{n-d}}, \quad \varepsilon_2(E) = E^{-\frac{\alpha_3}{d+1}},\\ 
  \alpha_3 = \tfrac{n-d}{2\bigl(n+\beta+\tfrac{n-d}{d+1}\bigr)}, \quad \text{for some fixed $\tau \in (0,1]$,}
  \end{gathered}
\end{equation}
where $U_{\widehat w_1,\dots,\widehat w_{d+1}}$ and $U^\varepsilon_{\widehat w_1,\dots,\widehat w_{d+1}}$ are defined by \eqref{er.defUw1..wd}, \eqref{er.defUew1..wd},  $\mathcal B_r$ and $Z^\varepsilon_{\widehat w_1,\dots,\widehat w_{d+1}}$ are defined by \eqref{er.defBr}, \eqref{er.Zew1..wd}, and $\beta$ is the number of \eqref{det.wprop}. Then 
\begin{equation}\label{er.t3uer}
  \begin{gathered}
    u(\cdot,E) = v + O(E^{-\alpha_3}) \quad \text{in $L^\infty(D)$}, \; E \to + \infty, \\
    |u(x,E)-v(x)| \leq A_3 E^{-\alpha_3},  \quad x \in D, \; E^{\frac 1 2} \geq \rho_3,
  \end{gathered}
\end{equation}
where $\rho_3$ and $A_3$ are defined in formulas \eqref{t3p.estvviaU} and \eqref{t3p.t3uer} of Section \ref{sec.t3p}.
\end{theorem}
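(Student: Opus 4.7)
The plan is to follow the template of Theorems~\ref{thm.w2=iw1} and \ref{thm.w2=w1sh}: use Fourier inversion and split $u(x,E) - v(x)$ into a truncation tail outside $\mathcal B_{r_3(E)}$, a safe-region part on $\mathcal B_{r_3(E)} \setminus Z^{\varepsilon_3(E)}_{\widehat w_1,\ldots,\widehat w_{d+1}}$, and a bad-region part on $\mathcal B_{r_3(E)} \cap Z^{\varepsilon_3(E)}_{\widehat w_1,\ldots,\widehat w_{d+1}}$, with $r_3(E)$ and $\varepsilon_3(E)$ tuned so that each piece is of size $E^{-\alpha_3}$. Since $v \in W^{n,1}(\mathbb R^d)$ gives $|\widehat v(p)| \leq C(1+|p|)^{-n}$, the truncation tail is bounded by $Cr_3(E)^{-(n-d)} = C' E^{-\alpha_3}$ by the very definition of $r_3$.

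On the safe region I would combine the Born estimate \eqref{in.estconsts} with the matrix inversion formula \eqref{er.defMi} applied to $M_{\widehat w_1,\widehat w_{i'(p)}}$. By Lemma~\ref{det.lemw} one has $|\widehat w_j(p)| = \widehat w(p) \geq c_1(1+|p|)^{-\beta}$, and the definition of $i'(p)$ together with $p \notin Z^{\varepsilon_3}$ forces $|\sin(sp_{i'(p)-1})| \gtrsim \varepsilon_3(E)/\sqrt d$; multiplying these gives $\|M^{-1}_{\widehat w_1,\widehat w_{i'(p)}}(p)\| \lesssim (1+|p|)^{\beta}/\varepsilon_3(E)$. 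This yields the pointwise estimate $|U_{\widehat w_1,\ldots,\widehat w_{d+1}}(p,E) - \widehat v(p)| \lesssim (1+|p|)^\beta \varepsilon_3(E)^{-1} E^{-1/2}$, and integration over $\mathcal B_{r_3}$ produces the safe-region bound $r_3^{d+\beta}\varepsilon_3^{-1}E^{-1/2}$, equal to $E^{-\alpha_3}$ because $\alpha_3$, $r_3$, $\varepsilon_3$ were chosen exactly to balance truncation and Born error.

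The main obstacle will be the bad-region contribution. I would decompose $U^\varepsilon_{\widehat w_1,\ldots,\widehat w_{d+1}}(p,E) - \widehat v(p)$ into three pieces: (a) the sphere average of the Born error at the points $\tfrac{\varepsilon_3}{s}\vartheta + \tfrac{\pi}{s}z(p)$; (b) the averaging correction $\widehat v_{\mathrm{avg}}(\tfrac{\pi}{s}z(p)) - \widehat v(\tfrac{\pi}{s}z(p))$; and (c) the Lipschitz correction $\widehat v(\tfrac{\pi}{s}z(p)) - \widehat v(p)$. Since $|\sin(s q_{i'(q)-1})| \gtrsim \varepsilon_3/\sqrt d$ for every $q = \tfrac{\varepsilon_3}{s}\vartheta + \tfrac{\pi}{s}z(p)$ on the sphere, the safe-region matrix bound still applies to (a), which then integrates to at most $r_3^{d+\beta}\varepsilon_3^{d-1}E^{-1/2}$, smaller than the safe-region error by a factor $\varepsilon_3^d$. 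For (b) and (c), the key input is that the compact support of $v$ together with $v \in W^{n,1}$ forces $x^K v \in W^{n,1}(\mathbb R^d)$ for every multi-index $K$, hence $|\partial^K \widehat v(p)| \leq C_K(1+|p|)^{-n}$; a second-order Taylor expansion around $\tfrac{\pi}{s}z(p)$ together with $\int_{\mathbb S^{d-1}}\vartheta_j\,d\vartheta = 0$ gives the pointwise bound $|(b)| + |(c)| \lesssim \varepsilon_3(1+|p|)^{-n}$. The crucial saving is that $Z^{\varepsilon_3}$ has volume density of order $\varepsilon_3^d$ in $\mathbb R^d$, so
\begin{equation*}
\int_{Z^{\varepsilon_3}\cap \mathcal B_{r_3}}(1+|p|)^{-n}\,dp \lesssim \varepsilon_3^d \int_{\mathbb R^d}(1+|p|)^{-n}\,dp,
\end{equation*}
which is finite since $n > d$. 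Hence the contribution of $(b)+(c)$ is bounded by $C\varepsilon_3^{d+1} = CE^{-\alpha_3}$ thanks to the choice $\varepsilon_3(E) = E^{-\alpha_3/(d+1)}$. Summing the three pieces and tracking the explicit constants and high-energy threshold gives \eqref{er.t3uer} with $\rho_3$ and $A_3$ as in the forthcoming Section~\ref{sec.t3p}.
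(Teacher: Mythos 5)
Your proposal is correct and follows essentially the same route as the paper's proof in Section~\ref{sec.t3p}: the same tail/safe-region/bad-region decomposition with the same $\varepsilon^{-1}(1+|p|)^{\beta}E^{-1/2}$ pointwise bound off $Z^{\varepsilon}_{\widehat w_1,\dots,\widehat w_{d+1}}$, the same splitting of $U^{\varepsilon}_{\widehat w_1,\dots,\widehat w_{d+1}}-\widehat v$ into an averaged Born error plus a smoothness correction of $\widehat v$, the same $\varepsilon^{d+1}$ measure count over the lattice of small balls, and the same exponent balancing giving $\alpha_3$. The only (immaterial) difference is that you refine the $\widehat v$ correction with a second-order Taylor expansion using $\int_{\mathbb S^{d-1}}\vartheta_j\,d\vartheta=0$, whereas the paper's estimates \eqref{t3p.phi2estmax}--\eqref{t3p.phi2est} get the needed $\varepsilon(1+|p|)^{-n}$ bound directly from the first-order mean value theorem.
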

Theorem \ref{thm.w1..wd} is proved in Section \ref{sec.t3p}.

\section{Proof of Theorem \ref{thm.w2=iw1}}\label{sec.t1p}

\begin{proposition}\label{prop.Uest} Let $v$ satisfy \eqref{in.vprop} and $w_1$, $w_2$ be the same as in \eqref{det.w2=iw1}, $d \geq 2$. Then:
\begin{equation}\label{t1p.estvviaU}
    \begin{gathered}
        \bigl| \widehat v(p) - U_{\widehat w_1,\widehat w_2}(p,E) \bigr| \leq c_2 |\widehat w(p)|^{-1} E^{-\frac 1 2} \quad \text{for $p \in \mathcal B_{2\sqrt E}$, $E^{\frac 1 2} \geq \rho_1$}, \\
        c_2 = 2c(D_0)N_0^3+2c(D_1)N_1^3,\\
        \rho_1 = \max_{j=0,1} \rho(D_j,N_j),
    \end{gathered}
\end{equation}
where $w$ is the function of \eqref{det.wprop}, \eqref{det.wjTj}, and $c$, $\rho$, $N_j$, $D_j$, $j = 0$, $1$, $2$, are the same as in estimates \eqref{in.estconsts}.
\end{proposition}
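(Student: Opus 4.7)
The plan is to compare the exact phaseless Born identity \eqref{in.phlBorn} for $\widehat v(p)$ with the definition \eqref{er.defU} of $U_{\widehat w_1,\widehat w_2}(p,E)$, and to control the resulting difference using the high-energy bound \eqref{in.estconsts}. Concretely, I would write
\begin{equation*}
\begin{pmatrix} \Re \widehat v(p) - \Re U_{\widehat w_1,\widehat w_2}(p,E) \\ \Im \widehat v(p) - \Im U_{\widehat w_1,\widehat w_2}(p,E) \end{pmatrix} = \tfrac 1 2 M^{-1}_{\widehat w_1,\widehat w_2}(p)\bigl(\widetilde b(p)-b_{\widehat w_1,\widehat w_2}(p,E)\bigr),
\end{equation*}
where $\widetilde b(p)$ denotes the right-hand side vector of \eqref{in.phlBorn} built from the exact $|\widehat v_j(p)|^2$, and $b_{\widehat w_1,\widehat w_2}(p,E)$ is built from the $|f_j(k_E(p),l_E(p))|^2$ as in \eqref{er.defb}. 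Thus the proof reduces to a pointwise bound on the entries of $M^{-1}_{\widehat w_1,\widehat w_2}(p)$ and a componentwise bound on $\widetilde b(p) - b_{\widehat w_1,\widehat w_2}(p,E)$.

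For the matrix inverse I would exploit the special structure $w_2 = iw_1$: this gives $\Re\widehat w_2 = -\Im\widehat w_1$ and $\Im\widehat w_2 = \Re\widehat w_1$, hence $\zeta_{\widehat w_1,\widehat w_2}(p) = |\widehat w_1(p)|^2 = |\widehat w(p)|^2$ for all $p \in \mathbb R^d$, and this quantity is strictly positive by \eqref{det.w2=iw1}. Reading the four entries of $M^{-1}_{\widehat w_1,\widehat w_2}(p)$ off \eqref{er.defMi}, each is a real or imaginary part of $\widehat w_1$ divided by $|\widehat w(p)|^2$, and so is pointwise bounded by $|\widehat w(p)|^{-1}$. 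This is the key geometric observation behind the estimate: the inverse matrix scales as $|\widehat w|^{-1}$, not the naive $|\widehat w|^{-2}$ suggested by the determinant alone.

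For the second factor, each component of $\widetilde b(p) - b_{\widehat w_1,\widehat w_2}(p,E)$ has the form $(|\widehat v_j(p)|^2-|f_j(k_E(p),l_E(p))|^2) - (|\widehat v(p)|^2-|f(k_E(p),l_E(p))|^2)$ for $j=1,2$, which by the triangle inequality together with \eqref{in.estconsts} is bounded by $\bigl(c(D_0)N_0^3 + c(D_j)N_j^3\bigr)E^{-1/2}$ whenever $E^{1/2}\geq \max(\rho(D_0,N_0),\rho(D_j,N_j))$. Since $w_2 = iw_1$ forces $\supp w_2 = \supp w_1$, we may take $\Omega_2 = \Omega_1$, so that $D_2 = D_1$ and $N_2 = N_1$. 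Combining the entrywise $|\widehat w|^{-1}$ bound on $M^{-1}$ with this componentwise $E^{-1/2}$ bound and summing the four contributions yields $|\widehat v(p) - U_{\widehat w_1,\widehat w_2}(p,E)| \leq c_2 |\widehat w(p)|^{-1} E^{-1/2}$ with $c_2 = 2c(D_0)N_0^3 + 2c(D_1)N_1^3$ and $\rho_1 = \max_{j=0,1}\rho(D_j,N_j)$, as required. The only nontrivial step is the $|\widehat w|^{-1}$ bound on $\|M^{-1}\|$; the rest is triangle-inequality bookkeeping.
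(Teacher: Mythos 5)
Your proposal is correct and follows exactly the route the paper intends: the paper's own "proof" merely cites the identity \eqref{in.phlBorn}, the definitions \eqref{er.defU}--\eqref{er.fpviafkfl}, the estimates \eqref{in.estconsts}, and the reductions $\Omega_2=\Omega_1$, $D_2=D_1$, $N_2=N_1$, and your write-up supplies precisely the missing bookkeeping (the exactness of \eqref{in.phlBorn}, the observation that $\zeta_{\widehat w_1,\widehat w_2}=|\widehat w|^2$ with each entry of $M^{-1}$ bounded by $|\widehat w|^{-1}$, and the componentwise application of \eqref{in.estconsts}), arriving at the stated $c_2$ and $\rho_1$.
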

Proposition \ref{prop.Uest} follows from formulas \eqref{in.phlBorn}, \eqref{in.kElEdef}, estimates \eqref{in.estconsts}, definitions \eqref{er.defU}--\eqref{er.fpviafkfl}, and the properties that
\begin{equation}\label{t1p.*2=*1}
    \Omega_2 = \Omega_1, \; D_2 = D_1, \; N_2 = N_1.
\end{equation}
In turn, properties \eqref{t1p.*2=*1} follow from \eqref{in.defvj} for $j = 1$, $2$, \eqref{in.wjass} for $j = 1$, and from the equality $w_2 = i w_1$ assumed in \eqref{det.w2=iw1}.

Next, we represent $v$ as follows:
\begin{equation}\label{t1p.vdec}
    \begin{gathered}
        v(x) = v^+(x,r)+v^-(x,r), \quad x \in D, \; r > 0,\\
        v^+(x,r) = \int_{\mathcal B_r} e^{-ipx} \widehat v(p) \, dp, \\
        v^-(x,r) = \int_{\mathbb R^d \setminus \mathcal B_r} e^{-ipx} \widehat v(p) \, dp.
    \end{gathered}   
\end{equation}
Since $v \in W^{n,1}(\mathbb R^d)$, $n > d$, we have 
\begin{equation}\label{t1p.v-est}
  \begin{gathered}
        |v^-(x,r)| \leq c_3 \|v\|_{n,1} r^{d-n}, \quad x \in D, \; r > 0, \\
        c_3 = |\mathbb S^{d-1}|\tfrac{(2\pi)^{-d} d^n}{n-d},
  \end{gathered}
\end{equation}
where $\|\cdot\|_{n,1}$ is defined in \eqref{er.defWn1}, and $|\mathbb S^{d-1}|$ is the standard Euclidean volume of $\mathbb S^{d-1}$. Indeed,
\begin{equation}\label{t1p.esthatv}
  \begin{gathered}
  |p_1^{k_1} \cdots p_d^{k_d} \widehat v(p)| \leq (2\pi)^{-d}\|v\|_{n,1}, \quad p = (p_1,\ldots,p_d) \in \mathbb R^d,\\
  \text{for any $k_1$, \dots $k_d \in \mathbb N \cup \{0\}$, $k_1+\cdots+k_d \leq n$},
  \end{gathered}
\end{equation}
assuming also that $p_j^0 = 1$. Taking an appropriate sum in \eqref{t1p.esthatv} over all such $k_1$, \dots, $k_d$ with $k_1+\cdots+k_d = m \leq n$, we get
\begin{equation}\label{t1p.ineqhatv}
  |p|^m |\widehat v(p)| \leq (|p_1|+\cdots+|p_d|)^m |\widehat v(p)| \leq (2\pi)^{-d} d^m \|v\|_{n,1}, \quad p \in \mathbb R^d.
\end{equation}
The definition of $v^-$ of \eqref{t1p.vdec} and inequalities \eqref{t1p.ineqhatv} for $m = n$ imply \eqref{t1p.v-est}.

In addition, using Proposition \ref{prop.Uest} and the estimate on $\widehat w$ of \eqref{det.wprop}, we obtain:
\begin{equation}\label{t1p.v+est}
\begin{gathered}
        \left| v^+(x,r)-\int_{\mathcal B_r} e^{-ipx} U_{\widehat w_1,\widehat w_2}(p,E) \, dp \right| \leq c_2 E^{-\frac 1 2} \int_{\mathcal B_r} |\widehat w(p)|^{-1} \, dp \\
        \leq c_1^{-1} c_2 E^{-\frac 1 2} \int_{\mathcal B_r} (1+|p|)^\beta dp \leq c_4 E^{-\frac 1 2} r^{d+\beta}, \quad c_4 = |\mathbb S^{d-1}| \tfrac{2^{d+\beta}}{d+\beta} c_1^{-1} c_2, \\
        x \in D, \; 1 \leq r \leq 2 E^{\frac 1 2}, \; E^{\frac 1 2} \geq \rho_1.
\end{gathered}
\end{equation}
As a corollary of \eqref{t1p.vdec}, \eqref{t1p.v-est}, \eqref{t1p.v+est}, we have
\begin{equation}\label{t1p.eru1}
         \left|v(x) - \int_{\mathcal B_r} e^{-ipx} U_{\widehat w_1,\widehat w_2}(p,E) \, dp\right| \leq c_3\|v\|_{n,1} r^{d-n} + c_4 E^{-\frac 1 2} r^{d+\beta},
\end{equation}
where $x \in D$, $1 \leq r \leq 2  E^{\frac 1 2}$, $E^{\frac 1 2} \geq \rho_1$. In addition, if $r = r_1(E)$, where $r_1(E)$ is defined in \eqref{er.defr1}, then
\begin{equation}\label{t1p.rviaE}
    \begin{aligned}
        r^{d-n} & = (2\tau)^{d-n} E^{-\alpha_1}, \\
        E^{-\frac 1 2} r^{d+\beta} & = (2\tau)^{d+\beta} E^{-\alpha_1}.
    \end{aligned}
\end{equation}
Using formulas \eqref{t1p.eru1} and \eqref{t1p.rviaE} and taking into account definitions \eqref{er.defr1}, we obtain
\begin{equation}\label{t1p.t1uer}
  \begin{gathered}
    |u(x,E) - v(x)| \leq A_1 E^{-\alpha_1},  \quad x \in D, \; E^{\frac 1 2} \geq \rho_1 \\
    A_1 = A_1(D_0,D_1,N_0,N_1,M,d,n,\beta,\tau) \\
     = (2\tau)^{d-n} c_3 \|v\|_{n,1} + (2\tau)^{d+\beta} c_4,
  \end{gathered}
\end{equation}
where $D_j$, $N_j$, $j=0$, $1$, are the same as in estimates \eqref{in.estconsts} and $\|v\|_{n,1} \leq M$. 

Theorem \ref{thm.w2=iw1} is proved.

\section{Proof of Theorem \ref{thm.w2=w1sh}}\label{sec.t2p}

\begin{proposition}\label{prop.Uest2} Let $v$ satisfy \eqref{in.vprop} and $w_1$, $w_2$ be the same as in \eqref{det.wjTj}--\eqref{det.defZw1w2}, $d \geq 2$. Then:
\begin{equation}
    \begin{gathered}\label{t2p.estvviaU}
        \bigl| \widehat v(p) - U_{\widehat w_1,\widehat w_2}(p,E) \bigr| \leq c_5 \varepsilon^{-1} (1+|p|)^\beta E^{-\frac 1 2},\\
        p \in \mathcal B_{2\sqrt E}\setminus Z^\varepsilon_{\widehat w_1,\widehat w_2}, \; E^{\frac 1 2} \geq \rho_2, \; 0 < \varepsilon < 1,\\
      c_5 = \tfrac{\pi}{2} (2c(D_0)N_0^3 + c(D_1)N_1^3 + c(D_2)N_2^3) c_1,\\
      \rho_2 = \max_{j=0,1,2} \rho(D_j,N_j), 
   \end{gathered}
\end{equation}
in addition, if $v \in W^{n,1}(\mathbb R^d)$, $n \geq 0$, then:
\begin{gather}
   \begin{gathered}\label{t2p.estvviaUe}
     |\widehat v(p) - U^\varepsilon_{\widehat w_1,\widehat w_2}(p,E)| \\ 
    \leq 2^\beta c_5 \varepsilon^{-1} (1+|p|)^\beta E^{-\frac 1 2} + c_6\varepsilon (1+ \tfrac{\pi}{|y|}|z(p)| + |p_\bot| )^{-n},\\
        p \in \mathcal B_{2\sqrt E} \cap Z^\varepsilon_{\widehat w_1,\widehat w_2}, \; E \geq \rho_2, \; 0 < \varepsilon < \min\{1,\tfrac 1 2 |y|\}; 
   \end{gathered}\\
       c_6 = 2^n \tfrac{(d+1)^{n+1}}{(2\pi)^d |y|} \max_{j=1,\dots,d} \|x_j v\|_{n,1}, \notag
\end{gather}
where $c$, $\rho$, $N_j$, $D_j$, $j = 0$, $1$, $2$, are the same as in estimates \eqref{in.estconsts}, $c_1$, $\beta$ are the same as in Lemma \ref{det.lemw}; $z(p)$, $p_\bot$ are defined in \eqref{er.defzt2}, \eqref{er.defUe}, $x_j v = x_j v(x)$, and $\|\cdot\|_{n,1}$ is defined in \eqref{er.defWn1}.
\end{proposition}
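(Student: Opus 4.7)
The statement has two separate estimates. Estimate \eqref{t2p.estvviaU} is a purely algebraic stability bound for the matrix inversion in \eqref{in.phlBorn}--\eqref{er.defMi}, exploiting the lower bounds on $|\sin(py)|$ off the set $Z^\varepsilon_{\widehat w_1,\widehat w_2}$ and on $|\widehat w|$ from Lemma \ref{det.lemw}. Estimate \eqref{t2p.estvviaUe} is a smoothness-against-symmetry argument that reduces to \eqref{t2p.estvviaU} at the two nearby points $p^\varepsilon_\pm$ plus a bound on the difference between $\widehat v(p)$ and the average of $\widehat v$ at $p^\varepsilon_\pm$. I would prove them in this order.

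For \eqref{t2p.estvviaU}, subtracting \eqref{er.defU} from \eqref{in.phlBorn} and regrouping gives
\[
\widehat v(p) - U_{\widehat w_1,\widehat w_2}(p,E) \,=\, \tfrac{1}{2}\, M^{-1}_{\widehat w_1,\widehat w_2}(p)\, e(p,E),
\]
where the components of $e(p,E)$ are $(|\widehat v_j(p)|^2-|f_j(p,E)|^2)-(|\widehat v(p)|^2-|f(p,E)|^2)$ for $j=1,2$. The refined Born inequality \eqref{in.estconsts} bounds each component by an $N_j$- and $c(D_j)$-explicit $O(E^{-1/2})$. The entries of the cofactor matrix in \eqref{er.defMi} are $\pm\Re\widehat w_j$ or $\pm\Im\widehat w_j$, hence bounded in absolute value by $|\widehat w_j(p)|=|\widehat w(p)|$ (the translation $T_j$ only introduces a phase). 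By \eqref{det.defZw1w2}, $|\zeta_{\widehat w_1,\widehat w_2}(p)|=|\sin(py)|\,|\widehat w(p)|^2$; for $p\notin Z^\varepsilon_{\widehat w_1,\widehat w_2}$ the distance from $py$ to $\pi\mathbb Z$ is at least $\varepsilon$, so $|\sin(py)|\geq\sin\varepsilon\geq 2\varepsilon/\pi$, while $|\widehat w(p)|\geq c_1(1+|p|)^{-\beta}$ by Lemma \ref{det.lemw}. Assembling these bounds yields \eqref{t2p.estvviaU}.

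For \eqref{t2p.estvviaUe} I would decompose
\[
\widehat v(p) - U^\varepsilon_{\widehat w_1,\widehat w_2}(p,E) \,=\, \tfrac{1}{2}\sum_{\sigma=\pm}\bigl(\widehat v(p^\varepsilon_\sigma)-U_{\widehat w_1,\widehat w_2}(p^\varepsilon_\sigma,E)\bigr) \,+\, \bigl(\widehat v(p)-\tfrac{1}{2}\sum_{\sigma=\pm}\widehat v(p^\varepsilon_\sigma)\bigr).
\]
The first bracket is handled by \eqref{t2p.estvviaU} at $p^\varepsilon_\pm$ (by construction $|p^\varepsilon_\sigma y-\pi z(p)|=\varepsilon$, so these points sit on the boundary of $Z^\varepsilon_{\widehat w_1,\widehat w_2}$ and \eqref{t2p.estvviaU} applies as a limit), together with the elementary comparison $(1+|p^\varepsilon_\pm|)^\beta\leq 2^\beta(1+|p|)^\beta$ which follows from $|p^\varepsilon_\pm-p|\leq 2\varepsilon/|y|<1$ (using $\varepsilon<|y|/2$); this produces the first summand in \eqref{t2p.estvviaUe}. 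For the second bracket, I use the identity $\partial_{p_j}\widehat v(\xi) = -i\,\widehat{x_j v}(\xi)$ and the multinomial Sobolev-decay estimate $(1+|\xi|)^n|\widehat{x_j v}(\xi)|\leq (1+d)^n(2\pi)^{-d}\|x_j v\|_{n,1}$ obtained by exactly the same argument as in \eqref{t1p.ineqhatv}. The fundamental theorem of calculus applied along the segments from $p$ to $p^\varepsilon_\pm$, together with the identity $|p^\varepsilon_+-p|+|p^\varepsilon_--p|=2\varepsilon/|y|$ coming from the signed expressions $p^\varepsilon_\pm-p=(\pm\varepsilon-\delta)y/|y|^2$ with $\delta=py-\pi z(p)$ and $|\delta|<\varepsilon$, yields the second summand with the stated constant $c_6$, provided one has the geometric comparison $1+|\xi|\geq\tfrac{1}{2}(1+\pi|z(p)|/|y|+|p_\bot|)$ along these segments.

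The main technical point is verifying that geometric comparison uniformly for $\xi$ on the two segments. It reduces to elementary planar reasoning in the coordinates $p=p_\bot+(py)y/|y|^2$ using $|py-\pi z(p)|<\varepsilon$ and $|\xi-p|\leq 2\varepsilon/|y|<1$; everything else is routine constant bookkeeping.
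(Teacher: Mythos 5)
Your proposal is correct and follows essentially the same route as the paper: the same matrix-inversion bound using $|\sin(py)|\geq \tfrac{2\varepsilon}{\pi}$ off $Z^\varepsilon_{\widehat w_1,\widehat w_2}$ together with $\widehat w(p)\geq c_1(1+|p|)^{-\beta}$ for \eqref{t2p.estvviaU}, and the same decomposition into the averaged error at $p^\varepsilon_\pm$ (controlled by \eqref{t2p.estvviaU} and $(1+|p^\varepsilon_\pm|)^\beta\leq 2^\beta(1+|p|)^\beta$) plus the symmetric difference of $\widehat v$ (controlled via the decay of $\partial_{p_j}\widehat v = i\,\widehat{x_j v}$ and the comparison $1+|\xi|\geq\tfrac12\bigl(1+\tfrac{\pi}{|y|}|z(p)|+|p_\bot|\bigr)$ on the segment $[p^\varepsilon_-,p^\varepsilon_+]$) for \eqref{t2p.estvviaUe}. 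The remaining differences (cofactor bound versus the explicit factored form of $M^{-1}_{\widehat w_1,\widehat w_2}$, fundamental theorem of calculus versus mean value theorem) are cosmetic.
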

\begin{proof}[Proof of Proposition \ref{prop.Uest2}] It follows from formulas \eqref{det.wjTj}, \eqref{det.defZw1w2} and \eqref{er.defM}, \eqref{er.defMi} that
\begin{equation}\label{t2p.expMMi}
  \begin{gathered}
  M_{\widehat w_1,\widehat w_2}(p) = \widehat w(p)
  \begin{pmatrix}
    \cos(T_1 p) & \sin (T_1 p) \\
    \cos(T_2 p) & \sin (T_2 p)
  \end{pmatrix}, \quad p \in \mathbb R^d,\\
  M^{-1}_{\widehat w_1,\widehat w_2}(p) = \frac{1}{\sin(py)\widehat w(p)} 
  \begin{pmatrix}
    \sin(T_2 p) & - \sin(T_1 p) \\
    -\cos(T_2 p) & \cos(T_1 p)
  \end{pmatrix}, \quad p \in \mathbb R^d \setminus Z^\varepsilon_{\widehat w_1,\widehat w_2}.
  \end{gathered}
\end{equation}
Also note that
\begin{equation}\label{t2p.estsin}
    |\sin (py)| \geq \tfrac{2\varepsilon}{\pi}, \quad p \in \mathbb R^d \setminus Z^\varepsilon_{\widehat w_1,\widehat w_2}, \; 0 < \varepsilon < 1.
\end{equation}
The estimate \eqref{t2p.estvviaU} follows from \eqref{in.phlBorn}, \eqref{in.estconsts}, \eqref{det.wprop}, \eqref{er.defU}, \eqref{er.defb}, \eqref{er.fpviafkfl} and from \eqref{t2p.expMMi}, \eqref{t2p.estsin}. 

It remains to prove \eqref{t2p.estvviaUe}. Using definition \eqref{er.defUe}, one can write
\begin{equation}\label{t2p.Uedec}
  \begin{gathered}
  U_{\widehat w_1,\widehat w_2}^\varepsilon(p,E) - \widehat v(p) = \varphi_1^\varepsilon(p,E) + \varphi_2^\varepsilon(p), \quad p \in \mathcal B_{2\sqrt E} \cap Z^\varepsilon_{\widehat w_1,\widehat w_2},\\
  \varphi_1^\varepsilon(p,E) = \tfrac 1 2 \bigl(U_{\widehat w_1,\widehat w_2}^\varepsilon(p_-^\varepsilon,E)-\widehat v(p_-^\varepsilon) \bigr) \\
     + \tfrac 1 2 \bigl( U_{\widehat w_1,\widehat w_2}^\varepsilon(p_+^\varepsilon,E)-\widehat v(p_+^\varepsilon)\bigr), \quad p \in \mathcal B_{2\sqrt E} \cap Z^\varepsilon_{\widehat w_1,\widehat w_2},\\
  \varphi_2^\varepsilon(p) = \tfrac 1 2 \bigl( \widehat v(p_-^\varepsilon) + \widehat v(p_+^\varepsilon) \bigr) - \widehat v(p), \quad p \in Z^\varepsilon_{\widehat w_1,\widehat w_2}.
  \end{gathered}
\end{equation}
Using estimate \eqref{t2p.estvviaU}, formula \eqref{t2p.Uedec} and the definitions of $p^\varepsilon_\pm$ in \eqref{er.defUe}, we get
\begin{equation}\label{t2p.phi1est}
    \begin{gathered}
  |\varphi^\varepsilon_1(p,E)| \leq \tfrac 1 2 c_5 \varepsilon^{-1} E^{-\frac 1 2} \bigl( (1+|p_-^\varepsilon|)^\beta + (1+|p_+^\varepsilon|)^\beta \bigr) \\
    \leq c_5 \varepsilon^{-1}(1+|p|+2\tfrac{\varepsilon}{|y|})^\beta E^{-\frac 1 2} 
  \leq 2^\beta c_5 \varepsilon^{-1}(1+|p|)^\beta E^{-\frac 1 2}, \\
        \text{for $\varepsilon$ as in \eqref{t2p.estvviaUe}, $p \in \mathcal B_{2\sqrt E} \cap Z^\varepsilon_{\widehat w_1,\widehat w_2}$.}
    \end{gathered}
\end{equation}
Next, using the definition of $\varphi^\varepsilon_2$ in \eqref{t2p.Uedec} and the mean value theorem, we obtain
\begin{equation}\label{t2p.phi2est}
   |\varphi_2^\varepsilon(p)| \leq \tfrac{\varepsilon}{|y|} \max \{ |\tfrac{y}{|y|} \nabla \widehat v(\xi)| \colon \xi \in [p_-^\varepsilon,p_+^\varepsilon] \}, \quad p \in Z^\varepsilon_{\widehat w_1,\widehat w_2},
\end{equation}
where $[p_-^\varepsilon,p_+^\varepsilon]$ denotes the segment joining $p_-^\varepsilon$ to $p_+^\varepsilon$. Here, the mean value theorem was used for $\widehat v(\xi)$ on $[p_-^\varepsilon,p]$ and on $[p,p_+^\varepsilon]$.

Note also that
\begin{equation}\label{t2p.dvest}
|\nabla \widehat v(\xi)| \leq d \max_{j=1,\dots,d} \bigl| \tfrac{\partial \widehat v}{\partial \xi_j}(\xi) \bigr|, \quad \xi = (\xi_1,\dots,\xi_d) \in [p_-^\varepsilon,p_+^\varepsilon].
\end{equation}
In addition, the following estimates hold:
\begin{equation}\label{t2p.phi2gest}
  \begin{gathered}
   \biggl|\frac{\partial \widehat v}{\partial \xi_j}(\xi) \biggr| \leq \frac{(1+d)^n}{(2\pi)^d (1+|\xi|)^n} \| x_j v \|_{n,1}, \quad \xi \in [p_-^\varepsilon,p_+^\varepsilon], \; j = 1,\dots,d.
  \end{gathered}
\end{equation}
Indeed, taking the sum in \eqref{t1p.ineqhatv} over all $m = 0$, \dots, $n$ with the binomial coefficients, we get
\begin{equation}\label{t2p.ineqhatv}
  \begin{gathered}
  (1+|p|)^n |\widehat v(p)| \leq (1+|p_1|+\cdots+|p_d|)^n |\widehat v(p)| \\
     \leq (2\pi)^{-d} (1+d)^n \|v\|_{n,1}, \quad p \in \mathbb R^d.
  \end{gathered}
\end{equation}
Estimates \eqref{t2p.phi2gest} follow from \eqref{t2p.ineqhatv}, where we replace $v$ by $x_j v$ and use that $v$ belongs to $W^{n,1}(\mathbb R^d)$ and is compactly supported.

Estimates \eqref{t2p.phi2est}--\eqref{t2p.phi2gest} imply
\begin{equation}\label{t2p.phi2festmax}
  |\varphi^\varepsilon_2(p)| \leq 2^{-n} c_6 \varepsilon \max\bigl\{ \bigl(1+ |\xi| \bigr)^{-n} \colon \xi \in [p_-^\varepsilon,p_+^\varepsilon] \bigr\}, \quad p \in Z^\varepsilon_{\widehat w_1,\widehat w_2}.
\end{equation}
Using also that
\begin{equation}\label{t2p.pviataupb}
    \text{$\xi = \tau \tfrac{y}{|y|} + p_\bot$, where $|\tau - \tfrac{\pi}{|y|}z(p) | \leq \tfrac{\varepsilon}{|y|}$, if $\xi \in [p_-^\varepsilon,p_+^\varepsilon]$,}
\end{equation}
and that $\varepsilon < |y|$, we obtain
\begin{equation}\label{t2p.phi2fest}
  \begin{gathered}
  |\varphi^\varepsilon_2(p)| \leq 2^{-n} c_6 \varepsilon \bigl(1+\tfrac 1 2 ( \tfrac{\pi}{|y|} |z(p)| - \tfrac{\varepsilon}{|y|} + |p_\bot|) \bigr)^{-n} \\
    \leq c_6 \bigl(1+ \tfrac{\pi}{|y|} |z(p)| + |p_\bot| \bigr)^{-n}, \quad p \in Z^\varepsilon_{\widehat w_1,\widehat w_2}.
  \end{gathered}
\end{equation}
Estimate \eqref{t2p.estvviaUe} follows from \eqref{t2p.phi1est} and \eqref{t2p.phi2fest}.

Proposition \ref{prop.Uest2} is proved. 
\end{proof}

The final part of the proof of Theorem \ref{thm.w2=w1sh} is as follows. In a similar way with \eqref{t1p.vdec}, we represent $v$ as follows:
\begin{equation}\label{t2p.vdec}
 \begin{gathered}
    v(x) = v^+_1(x,r) + v^+_2(x,r) + v^-(x,r), \quad x \in D, \; r > 0,\\
    v^+_1(x,r) = \int_{\mathcal B_r \setminus Z^\varepsilon_{\widehat w_1,\widehat w_2}} e^{-ipx} \widehat v(p) \, dp, \\
    v^+_2(x,r) = \int_{\mathcal B_r \cap Z^\varepsilon_{\widehat w_1,\widehat w_2}} e^{-ipx} \widehat v(p) \, dp, \\
    v^-(x,r) = \int_{\mathbb R^d \setminus \mathcal B_r} e^{-ipx} \widehat v(p) \, dp.
 \end{gathered}
\end{equation}
Since $v \in W^{n,1}(\mathbb R^d)$, estimate \eqref{t1p.v-est} holds.

Using estimates \eqref{t2p.estvviaU}, \eqref{t2p.estvviaUe}, we get:
\begin{gather}\label{t2p.v+est}
  \begin{gathered}
      \biggl| v^+_1(x,r)-\int_{\mathcal B_r \setminus Z^\varepsilon_{\widehat w_1,\widehat w_2}}   e^{-ipx} U_{\widehat w_1,\widehat w_2}(p,E) \, dp \\
        + v^+_2(x,r) - \int_{\mathcal B_r \cap Z^\varepsilon_{\widehat w_1,\widehat w_2} }   e^{-ipx} U^\varepsilon_{\widehat w_1,\widehat w_2}(p,E) \, dp \biggr| \leq I_1 + I_2, \\
  \end{gathered}\\
   I_1 = 2^\beta c_5 \varepsilon^{-1} E^{-\frac 1 2} \int_{\mathcal B_r} (1+|p|)^\beta dp,\\
   I_2 = c_6\varepsilon \int_{\mathcal B_r \cap Z^\varepsilon_{\widehat w_1,\widehat w_2}} \hspace{-2em} \bigl(1+ \tfrac{\pi}{|y|} |z(p)| + |p_\bot| \bigr)^{-n} \, dp,\\
      x \in D, \; 1 \leq r \leq 2 E^{\frac 1 2}, \; E^{\frac 1 2} \geq \rho_2, \notag
\end{gather}
where $\rho_2$ is the same as in Proposition \ref{prop.Uest2}. In addition:
\begin{gather}
    \begin{gathered}\label{t2p.I1est}
    I_1 \leq c_7 \varepsilon^{-1} E^{\frac 1 2} r^{d+\beta}, \quad c_7 = |\mathbb S^{d-1}| \tfrac{2^{d+2\beta}}{d+\beta} c_5;
    \end{gathered}\\
    I_2 = c_6 \varepsilon \sum_{z \in \mathbb Z} \int\limits_{ \left\{ \begin{smallmatrix} \tau^2 + p_\bot^2 \leq r^2 \\  |\tau - \tfrac{\pi}{|y|}z| \leq \tfrac{\varepsilon}{|y|} \end{smallmatrix} \right\} } \hspace{-2em}  \bigl( 1+ \tfrac{\pi}{|y|}|z|+ |p_\bot| \bigr)^{-n} d\tau dp_\bot, \notag
\end{gather}
where $\tau \in \mathbb R$, $p_\bot \in \mathbb R^d$, $p_\bot \cdot y = 0$, 
\begin{equation}\label{t2p.I2est}
    \begin{gathered}
   I_2 \leq 2 c_6 \frac{\varepsilon^2}{|y|} \sum_{z \in \mathbb Z} \; \int\limits_{\xi \in \mathbb R^{d-1}, |\xi|\leq r} \hspace{-1em} \bigl(1+ \tfrac{\pi}{|y|} |z| + |\xi| \bigr)^{-n} \, d\xi \leq c_6 c_8 \varepsilon^2, \\
    c_8 =  \frac{2}{|y|}\frac{|\mathbb S^{d-2}|}{n-d+1} \sum_{z \in \mathbb Z} (1 + \tfrac{\pi}{|y|} |z| )^{d-n-1}.
    \end{gathered}
\end{equation}

In addition, if $r = r_2(E)$, $\varepsilon = \varepsilon_2(E)$, where $r_2(E)$, $\varepsilon_2(E)$ are defined in \eqref{er.defr2}, then
\begin{equation}\label{t2p.rviaE}
  \begin{aligned}
      r^{d-n} & = (2\tau)^{d-n} E^{-\alpha_2}, \\
      \varepsilon^{-1}E^{-\frac 1 2} r^{d+\beta} & = (2\tau)^{d+\beta} E^{-\alpha_2}, \\
      \varepsilon^2 & = E^{-\alpha_2}.
  \end{aligned}
\end{equation}

Using representation \eqref{t2p.vdec}, estimates \eqref{t1p.v-est}, \eqref{t2p.v+est}, \eqref{t2p.I1est}, \eqref{t2p.I2est}, formulas \eqref{t2p.rviaE} and taking into account definitions \eqref{er.defr2}, we obtain
\begin{equation}\label{t2p.t2uer} 
  \begin{gathered}
  |u(x,E)-v(x)| \leq A_2  E^{-\alpha_2}, \quad x \in D, \; E^{\frac 1 2} \geq \rho_2, \\
  A_2 = A_2(D_0,D_1,D_2,N_0,N_1,N_2,M,M_1,\dots,M_d,d,n,\beta,\tau,|y|) \\
   = (2\tau)^{d+\beta} c_7 + c_6 c_8 + (2\tau)^{d-n} c_3 \|v\|_{n,1},
  \end{gathered}
\end{equation}
where $D_j$, $N_j$ are the same as in estimates \eqref{in.estconsts} and $\|v\|_{n,1} \leq M$, $\|x_j v\|_{n,1} \leq M_j$.

Theorem \ref{thm.w2=w1sh} is proved.

\section{Proof of Theorem \ref{thm.w1..wd}}\label{sec.t3p}

\begin{proposition}\label{prop.Uest3}
  Let $v$ satisfy \eqref{in.vprop} and $w_1$, \dots, $w_{d+1}$ be the same as in \eqref{det.wjTj}, \eqref{det.defw1wd}, $d \geq 2$. Then:
  \begin{equation}
      \begin{gathered}\label{t3p.estvviaU}
     |\widehat v(p) - U_{\widehat w_1,\dots,\widehat w_{d+1}}(p,E)| \leq c_9 \varepsilon^{-1} (1+|p|)^\beta E^{-\frac 1 2}, \\
      p \in \mathcal B_{2\sqrt E} \setminus Z^\varepsilon_{\widehat w_1,\dots,\widehat w_{d+1}}, \; E^{\frac 1 2} \geq \rho_3, \; 0 < \varepsilon < 1,\\
        c_9 = \tfrac{\pi \sqrt d}{2} \bigl( 2c(D_0)N_0^3 + c(D_1)N_1^3 + \max_{j=2,\dots,d+1}c(D_j)N_j^3 \bigr)c_1,\\
    \rho_3 = \max_{j=0,\dots,d+1} \rho(D_j,N_j),
  \end{gathered}
\end{equation}
in addition, if $v \in W^{n,1}(\mathbb R^d)$, $n \geq 0$, then:
\begin{gather}
      \begin{gathered}\label{t3p.estvviaUe}
     |\widehat v(p) - U^\varepsilon_{\widehat w_1,\dots,\widehat w_{d+1}}(p,E)| \\
     \leq 2^\beta c_9 \varepsilon^{-1} (1+|p|)^\beta E^{-\frac 1 2} + 2 c_6 \varepsilon \bigl( 1 + 2 \tfrac \pi s \|z(p)\|_2 \bigr)^{-n},\\
      p \in \mathcal B_{2\sqrt E} \cap Z^\varepsilon_{\widehat w_1,\dots,\widehat w_{d+1}}, \; E^{\frac 1 2} \geq \rho_3, \; 0 < \varepsilon < \min\{1, \tfrac 1 2 s\},
      \end{gathered}
  \end{gather}
where $c$, $\rho$, $D_j$, $N_j$, $j = 0$, \dots, $d+1$, are defined as in \eqref{in.estconsts}; $c_1$, $\beta$ are the same as in Lemma \ref{det.lemw}, $c_6$ is the same as in Proposition \ref{prop.Uest2}, $z(p)$ is defined in \eqref{er.defzt3} and $\| z(p) \|_2$ is the standard Euclidean norm of $z(p)$.
\end{proposition}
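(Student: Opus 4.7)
The plan is to reduce Proposition \ref{prop.Uest3} to arguments analogous to those in the proof of Proposition \ref{prop.Uest2}, with one genuinely new ingredient: a coordinate-wise sine lower bound that captures a factor $1/\sqrt d$ loss caused by redistributing the Euclidean distance of $p$ from the lattice $(\pi/s)\mathbb Z^d$ over $d$ coordinates. First I would establish this bound. For $p \in \mathbb R^d \setminus Z^\varepsilon_{\widehat w_1,\dots,\widehat w_{d+1}}$, let $z^\ast \in \mathbb Z^d$ be the componentwise-nearest integer point to $sp/\pi$; then $|sp_i - \pi z^\ast_i| \leq \pi/2$ for every $i$ and $|sp - \pi z^\ast|_2 \geq \varepsilon$, hence $\max_i |sp_i - \pi z^\ast_i| \geq \varepsilon/\sqrt d$. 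Since $|\sin|$ is monotone on $[0,\pi/2]$ and by \eqref{er.defi'} the index $i'-1$ maximises $|\sin(sp_i)|$, the index attaining this maximum is also $i'-1$, so $|\sin(sp_{i'-1})| \geq \sin(\varepsilon/\sqrt d) \geq 2\varepsilon/(\pi\sqrt d)$. This is the analog of \eqref{t2p.estsin} with an extra factor $\sqrt d$, which is exactly the source of the $\sqrt d$ in $c_9$.

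Given this bound, \eqref{t3p.estvviaU} follows by the same scheme as the proof of Proposition \ref{prop.Uest2}. By \eqref{er.defUw1..wd} one may replace $U_{\widehat w_1,\dots,\widehat w_{d+1}}(p,E)$ by $U_{\widehat w_1,\widehat w_{i'}}(p,E)$ with translation vector $y = T_{i'} - T_1 = s e_{i'-1}$. Then the factorisation \eqref{t2p.expMMi} applies with $T_1 = 0$, $T_{i'} = s e_{i'-1}$, and combining \eqref{in.estconsts} for $v_0 = v$, $v_1$ and $v_{i'}$, the lower bound $|\widehat w(p)| \geq c_1(1+|p|)^{-\beta}$ from \eqref{det.wprop}, and the new sine bound gives \eqref{t3p.estvviaU}; the constant $c_9$ absorbs the $\sqrt d$ from the sine bound and the $\max_{j=2,\dots,d+1}$ coming from the a priori unknown index $i'$.

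For \eqref{t3p.estvviaUe} I would split, analogously to \eqref{t2p.Uedec},
\begin{equation*}
U^\varepsilon_{\widehat w_1,\dots,\widehat w_{d+1}}(p,E) - \widehat v(p) = \varphi_1^\varepsilon(p,E) + \varphi_2^\varepsilon(p),
\end{equation*}
\begin{equation*}
\varphi_1^\varepsilon(p,E) = \tfrac{1}{|\mathbb S^{d-1}|} \int\limits_{\mathbb S^{d-1}} \bigl( U_{\widehat w_1,\dots,\widehat w_{d+1}}(q(\vartheta),E) - \widehat v(q(\vartheta)) \bigr)\, d\vartheta,
\end{equation*}
\begin{equation*}
\varphi_2^\varepsilon(p) = \tfrac{1}{|\mathbb S^{d-1}|} \int\limits_{\mathbb S^{d-1}} \widehat v(q(\vartheta))\, d\vartheta - \widehat v(p), \qquad q(\vartheta) = \tfrac{\pi}{s} z(p) + \tfrac{\varepsilon}{s}\vartheta.
\end{equation*}
Under $\varepsilon < \min\{1, s/2\}$, every $q(\vartheta)$ lies at distance exactly $\varepsilon/s$ from $(\pi/s) z(p)$ and at distance at least $\pi/s - \varepsilon/s > \varepsilon/s$ from any other point of $(\pi/s)\mathbb Z^d$, hence $q(\vartheta) \in \mathcal B_{2\sqrt E} \setminus Z^\varepsilon_{\widehat w_1,\dots,\widehat w_{d+1}}$ (the closed complement). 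Since $|q(\vartheta)| \leq |p| + 2\varepsilon/s \leq |p| + 1$, applying \eqref{t3p.estvviaU} pointwise in $\vartheta$ yields $|\varphi_1^\varepsilon(p,E)| \leq 2^\beta c_9 \varepsilon^{-1}(1+|p|)^\beta E^{-1/2}$.

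For $\varphi_2^\varepsilon$ I would apply the mean value theorem on each segment $[p, q(\vartheta)]$, of length at most $2\varepsilon/s$, and control $|\nabla \widehat v|$ exactly as in \eqref{t2p.dvest}--\eqref{t2p.ineqhatv} with $v$ replaced by $x_j v$. Every $\xi \in [p, q(\vartheta)]$ satisfies $|\xi - (\pi/s) z(p)| \leq \varepsilon/s$, so $1+|\xi| \geq 1 + (\pi/s)\|z(p)\|_2 - \varepsilon/s \geq \tfrac{1}{2}\bigl(1 + 2(\pi/s)\|z(p)\|_2\bigr)$ under $\varepsilon < s/2$. Integrating the resulting pointwise bound over $\vartheta$ gives $|\varphi_2^\varepsilon(p)| \leq 2 c_6 \varepsilon \bigl(1+2(\pi/s)\|z(p)\|_2\bigr)^{-n}$, with the overall factor $2$ coming from the ratio $2d/(d+1) < 2$ in the comparison with $c_6$ (recall $c_6$ carries $(d+1)^{n+1}$ and an $|y|^{-1}$ that we identify with $s^{-1}$). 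Adding the two bounds proves \eqref{t3p.estvviaUe}. The main obstacle I foresee is the first step: tracking the $1/\sqrt d$ loss in the sine bound cleanly through the matrix inversion, and verifying that the condition $\varepsilon < s/2$ is simultaneously what keeps the averaging sphere in the complement of $Z^\varepsilon$ and what is needed to pass from $1+|\xi|$ to $\tfrac{1}{2}\bigl(1+2(\pi/s)\|z(p)\|_2\bigr)$ in the $\varphi_2^\varepsilon$ estimate.
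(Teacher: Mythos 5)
Your proposal is correct and follows essentially the same route as the paper: the same coordinate-wise sine lower bound $|\sin(sp_{i'-1})|\geq 2\varepsilon/(\pi\sqrt d)$ (which the paper states without proof and you justify), the same reduction to the $2\times 2$ matrix factorisation for $U_{\widehat w_1,\widehat w_{i'}}$, and the same decomposition of $U^\varepsilon-\widehat v$ into a spherical average of $U-\widehat v$ plus a spherical average of increments of $\widehat v$ handled by the mean value theorem. The bookkeeping of the constants ($\sqrt d$ in $c_9$, the factor $2$ and the $2^{\pm n}$ cancellation in the $\varphi_2^\varepsilon$ bound) matches the paper's.
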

\begin{proof}[Proof of Proposition \ref{prop.Uest3}] In a similar way with formulas \eqref{t2p.expMMi}, one can write
\begin{equation}
  \begin{gathered}\label{t3p.expMMi}
    M_{\widehat w_1,\widehat w_{i'}}(p) = \widehat w_1(p)
      \begin{pmatrix} 
	 1 & 0 \\
	 \cos(s p_{i'-1}) & \sin(s p_{i'-1})
      \end{pmatrix}, \quad p \in \mathbb R^d \setminus \tfrac{\pi}{s} \mathbb Z^d, \\
    \hspace{-1em} M^{-1}_{\widehat w_1,\widehat w_{i'}}(p) = \frac{1}{\sin(s p_{i'-1}) \widehat w_1(p)} 
      \begin{pmatrix}
	\sin(s p_{i'-1}) & 0 \\
	-\cos(s p_{i'-1}) & 1
      \end{pmatrix}, \; p \in \mathbb R^d \setminus Z^\varepsilon_{\widehat w_1,\dots,\widehat w_{d+1}},
   \end{gathered}
\end{equation}
where $i' = i'(p,s)$ is defined in \eqref{er.defi'}. Also note that
\begin{equation}\label{t3p.estsin}
  |\sin(s p_{i'-1})| \geq \tfrac{2\varepsilon}{\pi \sqrt d} , \quad p \in \mathbb R^d \setminus Z^\varepsilon_{\widehat w_1,\dots,\widehat w_{d+1}}, \; 0 < \varepsilon < 1.
\end{equation}
Estimate \eqref{t3p.estvviaU} follows from \eqref{in.phlBorn}, \eqref{in.estconsts}, \eqref{det.wprop}, \eqref{er.defU}, \eqref{er.defb}, \eqref{er.fpviafkfl}, \eqref{er.defi'}, \eqref{er.defUw1..wd} and from \eqref{t3p.expMMi}, \eqref{t3p.estsin}.

It remains to prove \eqref{t3p.estvviaUe}. Using definition \eqref{er.defUew1..wd}, we represent
\begin{equation}\label{t3p.Uedec}
  \begin{gathered}
  U^\varepsilon_{\widehat w_1,\dots,\widehat w_{d+1}}(p,E) - \widehat v(p) = \varphi_1^\varepsilon(p,E) + \varphi_2^\varepsilon(p), \quad p \in \mathcal B_{2\sqrt E} \cap Z^\varepsilon_{\widehat w_1,\dots,\widehat w_{d+1}} \\
  \varphi_1^\varepsilon(p,E) = \frac{1}{|\mathbb S^{d-1}|} \int_{\mathbb S^{d-1}} \bigl( U_{\widehat w_1,\dots,\widehat w_{d+1}}(\eta,E) - \widehat v(\eta) \bigr)\bigr|_{\eta = \tfrac{\varepsilon}{s}\vartheta + \tfrac{\pi}{s}z(p)} d\vartheta,\\
  \varphi_2^\varepsilon(p) = \frac{1}{|\mathbb S^{d-1}|} \int_{\mathbb S^{d-1}} \bigl( \widehat v\bigl( \tfrac{\varepsilon}{s}\vartheta + \tfrac{\pi}{s} z(p) \bigr) - \widehat v(p) \bigr) d\vartheta,
  \end{gathered}
\end{equation}
where $z(p)$ is defined in \eqref{er.defzt3}.

Using formulas \eqref{t3p.estvviaU}, \eqref{t3p.Uedec}, we obtain
\begin{equation}\label{t3p.phi1est}
  \begin{gathered}
    |\varphi_1^\varepsilon(p,E)| \leq c_9 \varepsilon^{-1} E^{-\frac 1 2} \frac{1}{|\mathbb S^{d-1}|} \int_{\mathbb S^{d-1}} \bigl( 1 + \bigl| \tfrac \varepsilon s \vartheta + \tfrac \pi s z(p) \bigr| \bigr)^\beta d\vartheta \\
    \leq c_9 \varepsilon^{-1} E^{-\frac 1 2} \frac{1}{|\mathbb S^{d-1}|} \int_{\mathbb S^{d-1}} \bigl(1 + |p| + 2\tfrac \varepsilon s \bigr)^\beta d\vartheta \leq 2^\beta c_9 \varepsilon^{-1} (1+|p|)^\beta E^{-\frac 1 2}, \\
      \text{for $\varepsilon$ as in \eqref{t3p.estvviaUe}}.
  \end{gathered}
\end{equation}

Next, using the definition of $\varphi^\varepsilon_2$ in formula \eqref{t3p.Uedec} and the mean value theorem, we get the following estimate:
\begin{equation}\label{t3p.phi2estmax}
 |\varphi^\varepsilon_2(p)| \leq 2 \tfrac{\varepsilon}{s} \max\bigl\{ |\nabla \widehat v(\xi) | \colon \xi \in \mathbb R^d, \; |\xi - \tfrac{\pi}{s} z(p)| \leq \tfrac{\varepsilon}{s} \bigr\}, \; p \in Z^\varepsilon_{\widehat w_1,\dots,\widehat w_{d+1}}.
\end{equation}
Here, the mean value theorem was used for $\widehat v(\xi)$ on $[p,\tfrac \varepsilon s \vartheta + \tfrac \pi s z(p)]$, $\vartheta \in \mathbb S^{d-1}$. One can see that 
\begin{equation}\label{t3p.dvest}
  \begin{gathered}
  \text{estimates \eqref{t2p.dvest} and \eqref{t2p.phi2gest} hold for all $\xi \in \mathbb R^d$}\\
  \text{such that $|\xi - \tfrac \pi s z(p)| \leq \tfrac \varepsilon s$, where $p \in Z^\varepsilon_{\widehat w_1,\dots, \widehat w_{d+1}}$}.
  \end{gathered}
\end{equation}
It follows from \eqref{t3p.phi2estmax}, \eqref{t3p.dvest} and from the upper estimate on $\varepsilon$ of \eqref{t3p.estvviaUe}, that
\begin{equation}\label{t3p.phi2est}
  \begin{gathered}
  |\varphi_2^\varepsilon(p)| \leq 2^{1-n} c_6 \varepsilon  \max\bigl\{ (1+|\xi|)^{-n} \colon |\xi - \tfrac \pi s z(p)| \leq \tfrac \varepsilon s \bigr\} \\
    \leq 2^{1-n} c_6 \varepsilon \bigl(1+\tfrac \pi s \|z(p)\|_2 - \tfrac \varepsilon s\bigr)^{-n} \\
    \leq 2 c_6 \varepsilon \bigl(1 + 2\tfrac \pi s \|z(p)\|_2 \bigr)^{-n}, \; p \in Z^\varepsilon_{\widehat w_1,\dots,\widehat w_{d+1}}.
  \end{gathered}
\end{equation}
Estimate \eqref{t3p.estvviaUe} follows from estimates \eqref{t3p.phi1est} and \eqref{t3p.phi2est}.
 
Proposition \ref{prop.Uest3} is proved. 
\end{proof}

The final part of the proof of Theorem \ref{thm.w1..wd} is as follows. In a similar way with \eqref{t2p.vdec}, we represent $v$ as follows:
\begin{equation}\label{t3p.vdec}
 \begin{gathered}
    v(x) = v^+_1(x,r) + v^+_2(x,r) + v^-(x,r), \quad x \in D, \; r > 0,\\
    v^+_1(x,r) = \int_{\mathcal B_r \setminus Z^\varepsilon_{\widehat w_1,\dots,\widehat w_{d+1}}} e^{-ipx} \widehat v(p) \, dp, \\
    v^+_2(x,r) = \int_{\mathcal B_r \cap Z^\varepsilon_{\widehat w_1,\dots,\widehat w_{d+1}}} e^{-ipx} \widehat v(p) \, dp, \\
    v^-(x,r) = \int_{\mathbb R^d \setminus \mathcal B_r} e^{-ipx} \widehat v(p) \, dp.
 \end{gathered}
\end{equation}
Since $v$ belongs to $W^{n,1}(\mathbb R^d)$, estimate \eqref{t1p.v-est} is valid.

Using estimates \eqref{t3p.estvviaU}, \eqref{t3p.estvviaUe} we obtain
\begin{equation}\label{t3p.v+est}
  \begin{gathered}
      \biggl| v^+_1(x,r)-\int_{\mathcal B_r \setminus Z^\varepsilon_{\widehat w_1,\dots,\widehat w_{d+1}} } \hspace{-2em}  e^{-ipx} U_{\widehat w_1,\dots,\widehat w_{d+1}}(p,E) \, dp \\
        + v^+_2(x,r) - \int_{\mathcal B_r \cap Z^\varepsilon_{\widehat w_1,\dots,\widehat w_{d+1}}} \hspace{-2em} e^{-ipx} U^\varepsilon_{\widehat w_1,\dots,\widehat w_{d+1}}(p,E) \, dp  \biggr| \leq J_1 + J_2, \\
        J_1 = 2^\beta c_9 \varepsilon^{-1} E^{-\frac 1 2} \int_{\mathcal B_r} (1+|p|)^\beta dp, \\
        J_2 = 2 c_6 \varepsilon \int_{\mathcal B_r \cap Z_{\widehat w_1,\dots,\widehat w_{d+1}}} \hspace{-2em} \bigl(1+2 \tfrac \pi s \|z(p)\|_2\bigr)^{-n} dp,\\
      x \in D, \; 1 \leq r \leq 2 E^{\frac 1 2}, \; E^{\frac 1 2} \geq \rho_3,
  \end{gathered}
\end{equation}
where $\rho_3$ is the same as in Proposition \ref{prop.Uest3}. In addition,
\begin{equation}\label{t3p.Jest}
\begin{gathered}
    J_1 \leq c_{10} \varepsilon^{-1} E^{-\frac 1 2} r^{d+\beta}, \quad c_{10} = |\mathbb S^{d-1}| \tfrac{2^{d+\beta}}{d+\beta} c_9, \\
    J_2 \leq c_{11} \varepsilon^{d+1}, \quad c_{11} = \left(\tfrac 1 s\right)^d |\mathcal B_1| \sum_{z \in \mathbb Z^d} \bigl(1 + 2 \tfrac \pi s \|z\|_2 \bigr)^{-n},
\end{gathered}
\end{equation}
where $|\mathcal B_1|$ is the standard Euclidean volume of $\mathcal B_1$. Finally, if $r = r_3(E)$, $\varepsilon = \varepsilon_3(E)$, where $r_3(E)$, $\varepsilon_3(E)$ are defined in \eqref{er.defr3}, then
\begin{equation}\label{t3p.rviaE}
  \begin{aligned}
      r^{d-n} & = (2\tau)^{d-n} E^{-\alpha_3}, \\
      \varepsilon^{-1} E^{-\frac 1 2} r^{d+\beta} & = (2\tau)^{d+\beta} E^{-\alpha_3}, \\
      \varepsilon^{d+1} & = E^{-\alpha_3}.
  \end{aligned}
\end{equation}
Using representation \eqref{t3p.vdec}, estimates \eqref{t1p.v-est}, \eqref{t3p.v+est}, \eqref{t3p.Jest}, formulas \eqref{t3p.rviaE} and taking into account definitions \eqref{er.defr3}, we obtain
\begin{equation}\label{t3p.t3uer}
  \begin{gathered}
  |u(x,E) - v(x)| \leq A_3 E^{-\alpha_3}, \quad x \in D, \; E^{\frac 1 2} \geq \rho_3, \\
  A_3 = A_3(D_0,\dots,D_{d+1},N_0,\dots,N_{d+1},M,d,n,\beta,\tau,s) \\
 = (2\tau)^{d+\beta}c_{10} + c_{11} + (2\tau)^{d-n}c_3\|v\|_{n,1},
  \end{gathered}
\end{equation}
where $\|v\|_{n,1} \leq M$ and $D_0$, \dots, $D_{d+1}$, $N_0$, \dots, $N_{d+1}$ are the same as in Proposition \ref{prop.Uest3}. 

Theorem \ref{thm.w1..wd} is proved.

\section{Proof of Lemma \ref{det.lemw}}\label{sec.l1p}
Note that 
\begin{gather}
    \widehat w(p) = \int_{\mathbb R^d} |\widehat q(\xi)|^2 \widehat \omega_\nu(p-\xi) \, d\xi, \quad p \in \mathbb R^d, \label{l1p.wft}\\
    \omega_\nu(x) = |x|^\nu K_\nu\bigl( |x| \bigr), \quad x \in \mathbb R^d, \label{l1p.defomnu}
\end{gather}
where $\widehat q$, $\widehat \omega_\nu$ are the Fourier transforms of $q$, $\omega_\nu$. The Fourier transform $\widehat \omega_\nu$ can be computed explicitely:
\begin{equation}\label{l1p.omnuft}
  \widehat \omega_\nu(p) = \frac{c_{12}}{(1+|p|^2)^{\frac d 2 + \nu}}, \quad c_{12} = \frac{\Gamma(\tfrac d 2 + \nu) 2^{\nu-1}}{\pi^{\frac d 2}}.
\end{equation}
Indeed, formula \eqref{l1p.omnuft} follows from the Fourier inversion theorem and the following computations:
\begin{equation*}
  \begin{gathered}
  \int_{\mathbb R^d} \frac{e^{-ipx} \, dp}{(1+|p|^2)^{\frac d 2 + \nu}} = \int_{\mathbb R} \int_{\mathbb R^{d-1}} \frac{e^{-i|x|t} \, dt\,d\xi}{(1+t^2 + |\xi|^2)^{\frac d 2 + \nu}} \\
  = |\mathbb S^{d-2}| \int_{\mathbb R}\int_{\mathbb R} \frac{e^{-i|x|t} r^{d-2} \, dt \, dr}{(1+t^2+r^2)^{\frac d 2 + \nu}} \\
  \overset{r=\sqrt{1+t^2}\tau}{=\joinrel=\joinrel=\joinrel=\joinrel=\joinrel=} |\mathbb S^{d-2}| \int_{\mathbb R} \frac{e^{-i|x|t} \, dt}{(1+t^2)^{\frac 1 2 + \nu}} \int_0^{+\infty}\frac{\tau^{d-2} \, d\tau}{(1+\tau^2)^{\frac d 2 + \nu}} \\
     = c_{12}^{-1} |x|^\nu K_\nu(|x|), \; x \in \mathbb R^d.
  \end{gathered}
\end{equation*}
Here, it was used that
\begin{gather*}
      \int_0^{+\infty} \frac{\tau^{d-2} \, d\tau}{(1+\tau^2)^{\frac d 2 + \nu}} = \frac 1 2 B(\tfrac{d-1}{2},\nu+\tfrac 1 2) = \frac 1 2 \frac{\Gamma(\tfrac{d-1}{2}) \Gamma(\nu+\tfrac 1 2)}{\Gamma(\tfrac d 2 + \nu)}, \\
        |\mathbb S^{d-2}| = \frac{2\pi^{\frac{d-1}{2}}}{\Gamma(\tfrac{d-1}{2})},
\end{gather*}
where $B$ and $\Gamma$ denote the beta and gamma functions.

Using \eqref{l1p.wft}, \eqref{l1p.omnuft}, we obtain the estimates 
\begin{equation}\label{l1p.west}
  \begin{gathered}
 \widehat w(p) \geq \int_{|\xi| \leq 1} \frac{c_{12} |\widehat q(\xi)|^2}{(1+|p-\xi|)^{d+2\nu}} d\xi \geq \frac{c_1(q,\nu)}{(1+|p|)^{d+2\nu}}, \quad p \in \mathbb R^d, \\
  c_1(q,\nu) = \frac{c_{12}}{2^{d+2\nu}} \int_{|\xi| \leq 1} |\widehat q(\xi)|^2 \, d\xi. 
  \end{gathered}
\end{equation}
Properties \eqref{det.wprop} follow from \eqref{det.defw}, \eqref{det.qprop}, \eqref{l1p.wft} and \eqref{l1p.west}.

Lemma \ref{det.lemw} is proved.

\section*{Aknowledgements}
The authors are grateful to the referee for remarks that have helped to improve the presentation.

\bibliographystyle{IEEEtranS}
\bibliography{biblio_utf}

\end{document}